\newcommand{\Aff}{\mathbb{A}}
\newcommand{\F}{\mathbb{F}}
\newcommand{\PP}{\mathbb{P}}
\newcommand{\Q}{\mathbb{Q}}
\newcommand{\Z}{\mathbb{Z}}
\newcommand{\calP}{\mathcal{P}}
\DeclareMathOperator{\Aut}{Aut}
\DeclareMathOperator{\Gal}{Gal}
\DeclareMathOperator{\Hom}{Hom}
\DeclareMathOperator{\Ind}{Ind}
\DeclareMathOperator{\PerFix}{PerFix}
\DeclareMathOperator{\re}{Re}
\DeclareMathOperator{\Twist}{Twist}
\newcommand{\epsh}{\varepsilon_H}
\newcommand{\IndH}{\operatorname{Ind}_H^G(1_H)}
\newcommand{\Per}{\operatorname{Per}}
\newtheorem{theorem}{Theorem}[section]
\newtheorem{lemma}[theorem]{Lemma}
\newtheorem{proposition}[theorem]{Proposition}
\theoremstyle{definition}
\newtheorem{definition}[theorem]{Definition}
\theoremstyle{remark}
\newtheorem{remark}[theorem]{Remark}
\g@addto@macro\bfseries{\boldmath} 
\begin{document}

\title[]{Counting Periodic Points over Finite Fields}
\subjclass[2010]{Primary 37P25; Secondary 11G25, 14G15, 37P35, 37P55}
\keywords{Idempotent relations, periodic points, twists of varieties}
\author{Laura Walton}
\address{Mathematics Department, Brown University, Providence, RI 02912, USA}
\email{laura@math.brown.edu}

\date{\today} 

\begin{abstract}
Let $V$ be a quasiprojective variety defined over $\F_q$, and let $\phi:V\rightarrow V$ be an endomorphism of $V$ that is also defined over $\F_q$. Let $G$ be a finite subgroup of $\operatorname{Aut}_{\mathbb{F}_q}(V)$ with the property that $\phi$ commutes with every element of $G$. We show that idempotent relations in the group ring $\mathbb{Q}[G]$ give relations between the periodic point counts for the maps induced by $\phi$ on the quotients of $V$ by the various subgroups of $G$. We also show that if $G$ is abelian, periodic point counts for the endomorphism on $V/G$ induced by $\phi$ are related to periodic point counts on $V$ and all of its twists by $G$.
\end{abstract}

\maketitle


\section{Introduction}

\subsection{Dynamics on varieties over finite fields.}
Let $V$ be a variety and $\phi$ an endomorphism of $V$, both defined over a field $K$. For $n\geq 1$, we define $\phi^n:=\phi\circ\ldots\circ\phi$ to be the $n$th iterate of $\phi$, and by convention $\phi^0$ is defined to be the identity map on $V$. 

\begin{definition} A point $P\in V(\overline{K})$ is called \emph{periodic} if there exists $n\geq 1$ such that $\phi^n(P)=P$, and is called \emph{preperiodic} if there exist $0\leq m <n$ such that $\phi^m(P)=\phi^n(P)$.
\end{definition}

If $K=\F_q$ for some prime power $q$, every point of $V$ is preperiodic for $\phi$, so the set of $\F_q$-preperiodic points of $V$ for $\phi$ is exactly $V(\F_q)$. While the question of bounding the number of points in $V(\F_q)$ is an interesting one, it has little to do with the dynamics of the map $\phi$. From a dynamics perspective, it is much more interesting to count the number of points in $V(\F_q)$ that are periodic for $\phi$. A growing body of literature on this topic has emerged \cites{JKM2016,Poonen2013}. Many recent results have studied the behaviour under iteration of a polynomial map $f$ on $\Aff^1(\F_q)$; see, for example \cites{Flynn-Garton2014,Ostafe-Sha2016,GXB2015,Jones-Boston2012}. There is a relative paucity of results on dynamics of general varieties over finite fields. We have give two results in this context, including a result relating periodic points of varieties and their quotients.

\begin{remark} If $K=\Q$, $V=\PP^1$ and $\phi$ is a rational map of degree at least 2, Northcott's Theorem states that $\phi$ has only finitely many preperiodic points defined over $\Q$ \cites{Northcott1950}. In the same setting, the Uniform Boundedness Conjecture of Morton and Silverman states that the number of preperiodic points is not only finite but is bounded by a constant depending only on the degree of $\phi$ \cites{Morton-Silverman1994,Morton-Silverman1995}. Our idempotent relation result (see Theorem \ref{T:secondmain}) does not hold over $\Q$, as shown by the example in Section \ref{S:example}.
\end{remark}

\subsection{Automorphisms of dynamical systems.}

As before, let $V$ be a variety defined over $\F_q$ and let $\phi:V\rightarrow V$ be an endomorphism of $V$ defined over $\F_q$. Let $G$ be a finite group of automorphisms of the dynamical system $(V,\phi)$ defined over $\F_q$; that is, $G$ is a finite subgroup of $\Aut_{\F_q}(V)$ such that $\phi$ commutes with every element of $G$. We assume that the quotient $V/G$ is defined;  in particular, this holds if $V$ is quasi-projective \cite{Mumford1970}*{Chapter 2,~Section 7}. 

\begin{definition}
We use $\Per_{\phi}(V)$ to denote the set of periodic points for $\phi$ in $V(\overline{\F}_q)$, and $\Per_{\phi}(V(\F_q))$ to denote the set of periodic points in $V(\F_q)$. We drop the subscript and write $\Per(V)$ or $\Per(V(\F_q))$ when the map is clear.
\end{definition}

\begin{definition}
Given an endomorphism $\phi$ of $V$ and a subgroup $G\leq \Aut_{\F_q}(V)$ such that $\phi$ commutes with every element of $G$, we let $\phi_G:V/G\rightarrow V/G$ to be the induced morphism on the quotient variety (See Section \ref{S:perpts} for detailed definitions). We use  $\Per_{\phi}(V/G)(\F_q)$, or simply $\Per(V/G)$, to denote the set of periodic points of $(V/G)(\F_q)$ for the map $\phi_G$.
\end{definition}

We want to compare counts of $\F_q$-periodic point counts for $\phi$ on $V$ with $\F_q$-periodic point counts for $\phi_G$ on $V/G$.

\begin{definition} Given $\chi\in H^1(\Gal(\overline{\F}_q/\F_q),G)$, we let $V^\chi$ be the twist of $V$ by $\chi$, $\phi^\chi$ the map induced by $\phi$ on $V^\chi$, and  $\Per(V^\chi)$ the periodic points on $V^\chi$ under $\phi^{\chi}$ (see Section \ref{S:twists} for precise definitions).\end{definition}

The first result we prove establishes a relationship between the number of $\F_q$-periodic points on the quotient $V/G$ and the $\F_q$-periodic points on $V$ and all of its twists.

\begin{theorem}\label{T:firstmain}
Let $V$ be a quasiprojective variety defined over $\F_q$, and let $\phi:V\rightarrow V$ be an endomorphism of $V$ that is also defined over $\F_q$. Let $G$ be a finite abelian subgroup of $\Aut_{\F_q}(V)$ with the property that $\phi$ commutes with every element of $G$.
Then, the following holds:
$$\Bigl|\Per((V/G)(\F_q))\Bigr|=\frac{1}{|G|}\left(\sum_{\chi\in H^1(\Gal(\overline{\F}_q/\F_q),G)}\Bigl|\Per(V^\chi(\F_q)) \Bigr| \right).$$
\end{theorem}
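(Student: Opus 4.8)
The plan is to fix a prime power $q$ and the profinite group $\Gamma = \Gal(\overline{\F}_q/\F_q)$, which is procyclic topologically generated by the Frobenius $F$. Since $G$ is abelian, $\Gamma$ acts trivially on $G$ (the action of $\Gamma$ on $\Aut_{\F_q}(V)$ factors through inner automorphisms on the relevant part, but more simply, $G$ consists of $\F_q$-automorphisms, so $\Gamma$ fixes them pointwise), and hence $H^1(\Gamma, G) = \Hom_{\mathrm{cont}}(\Gamma, G) = G/(F-1)G$; since $G$ is abelian and $\Gamma$ acts trivially this is just $\Hom(\Zhat, G) = G$ (finite). So the sum on the right has $|G|$ terms, one for each $g \in G$, where $V^g$ is the twist by the cocycle sending $F \mapsto g$.

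\medskip
\noindent\textbf{Step 1: Reduce to a fiberwise count over the quotient.} The map $\pi: V \to V/G$ is a $G$-torsor over the open locus $U \subseteq V/G$ where $G$ acts freely (on varieties over finite fields I would handle the ramification locus separately, or — better — invoke that for periodic-point counting the contribution of the non-free locus can be absorbed, but the cleanest route is to note that $\Per$ is compatible with the stratification and argue one stratum at a time; I expect this to be where care is needed). Over a point $\bar{y} \in (V/G)(\F_q)$ in the free locus that is periodic for $\phi_G$, the fiber $\pi^{-1}(\bar{y})(\overline{\F}_q)$ is a $G$-torsor, and the Frobenius $F$ permutes it compatibly with the $G$-action, so this fiber corresponds to a class in $H^1(\Gamma, G) \cong G$; call it $c(\bar y)$. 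The key observation is: the $\F_q$-points of $V$ lying over $\bar y$ that are periodic for $\phi$ are exactly the $F$-fixed points in the fiber, and these are nonempty (and then form a single $G_{\text{stab}}$-orbit of size... ) precisely when $c(\bar y)$ is trivial. More generally, the $\F_q$-points of the twist $V^g$ lying over $\bar y$ and periodic for $\phi^g$ correspond to $F$-fixed points in the fiber twisted by $g$, which are nonempty exactly when $c(\bar y) = [g]$ in $H^1(\Gamma,G)=G$.

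\medskip
\noindent\textbf{Step 2: Count.} For a fixed periodic $\bar y \in \Per((V/G)(\F_q))$ with class $c(\bar y) = h \in G$, the fiber contributes to $|\Per(V^g(\F_q))|$ exactly when $g = h$ (using that $G$ abelian means the twisting action on $H^1$ is trivial, so there's no further conjugation ambiguity), and when $g=h$ it contributes exactly $|G|$ points: the fiber of $V^g \to V/G$ over $\bar y$ has $|G|$ points over $\overline{\F}_q$ all defined over $\F_q$ (since the twisted Frobenius now fixes the whole fiber), all of which are periodic for $\phi^g$ because periodicity can be checked downstairs — $\phi^g$ commutes with $G$ and the fiber is finite, and some iterate of $\phi_G$ fixes $\bar y$, hence some iterate of $\phi^g$ stabilizes the fiber setwise, hence (fiber finite) fixes each point to a power. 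Summing over all $\bar y$:
$$
\sum_{g \in G} \bigl|\Per(V^g(\F_q))\bigr| \;=\; \sum_{\bar y \in \Per((V/G)(\F_q))} \#\{g : c(\bar y) = g\}\cdot |G| \;=\; |G|\cdot\bigl|\Per((V/G)(\F_q))\bigr|,
$$
which is the claimed identity after dividing by $|G|$.

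\medskip
\noindent\textbf{Main obstacle.} The honest difficulty is the ramification locus: where $G$ does not act freely, $\pi^{-1}(\bar y)$ is not a torsor, the "class" $c(\bar y)$ is not well-defined, and the fiber counts in $V$ versus in $V^g$ are genuinely different. I would handle this by stratifying $V/G$ by the conjugacy class (equivalently, since $G$ abelian, by the subgroup) $H = \Stab_G(P)$ of a point $P$ above, noting this stratification is $\phi_G$-stable, and on each stratum running the same torsor argument with $G/H$ in place of $G$; the bookkeeping to see that the twist-by-$g$ count on each stratum still sums correctly — using that the twists $V^g$ restricted to a stratum only depend on $g \bmod H$ appropriately, and that $\sum_{g\in G}$ redistributes as $|H|\sum_{\bar g \in G/H}$ — is the technical heart. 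I would also need Section \ref{S:twists}'s precise definition of $\phi^g$ to confirm it is the map making the twisted fiber diagram commute, so that "periodic downstairs $\Rightarrow$ periodic upstairs on the twist" holds verbatim.
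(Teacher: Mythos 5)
Your reorganization of the count---fixing a twist $V^g$ and counting its $\F_q$-points in the fiber over each $\bar y \in \Per((V/G)(\F_q))$, then summing over $g$---is a legitimate approach, and your free-locus torsor argument in Steps 1--2 is correct. But, as you flag yourself, Step 2 as written is false at ramified points, and the gap is real: when a preimage $Q_1$ of $\bar y$ has nontrivial stabilizer $G_Q$, there is no single element $c(\bar y)\in G$; the choices of $g_q$ with $Q_1^{\sigma_q}=g_q(Q_1)$ form a coset of $G_Q$, so the fiber becomes $\F_q$-rational on $|G_Q|$ distinct twists, and on each such twist the fiber contributes $|G|/|G_Q|$ points (not $|G|$). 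The total per-$\bar y$ contribution is still $|G_Q|\cdot|G|/|G_Q|=|G|$, matching what you need, but your formulation ``contributes to $|\Per(V^g(\F_q))|$ exactly when $g=h$ ... exactly $|G|$ points'' is only true on the free locus. Your stratification sketch gets the arithmetic right, but the phrase ``the twists $V^g$ restricted to a stratum only depend on $g \bmod H$'' is not the right statement---the twists are genuinely distinct varieties over $\F_q$; what depends only on the coset of $g$ modulo $H=G_Q$ is whether the fiber of $V^g$ over $\bar y$ has any $\F_q$-points.

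The paper avoids stratification entirely by switching the order of summation. Proposition~\ref{P:Stabs Correspond} fixes a single preimage $Q_1$ of $\bar y$ and shows $Q_1$ becomes an $\F_q$-point on exactly $|G_Q|$ twists (one twist for each of the $|G_Q|$ valid choices of $g_q$). Multiplying by the $|G|/|G_Q|$ preimages (Proposition~\ref{P:Fq Points Correspond}) gives a total of $|G|$ (twist, rational point) pairs per $\bar y$, and this per-preimage bookkeeping handles the free and ramified loci uniformly without any case split. Periodicity is then transferred via the isomorphism $\psi_\chi : V^\chi \to V$ (Proposition~\ref{P:Per Points Correspond}) rather than by re-running a ``periodicity descends to the quotient'' argument on $V^g \to V/G$ as you do, though both routes work. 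So the two proofs count the same bijection from opposite sides; the paper's side needs no stratification, while yours requires the coset-of-$G_Q$ correction to be carried out explicitly on each stratum to be complete.
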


The second result establishes a relation between the quotients of $V$ by the various subgroups of $G$. If $H\leq G$ is a subgroup, we let $\epsh\in\mathbb{Q}[G]$ be its idempotent in the group ring (see Section \ref{S:idempotent} for definitions). We show the following, where the sum is taken over all subgroups $H$ of $G$.

\begin{theorem}\label{T:secondmain}
For $\alpha\in\Q[G]$, write $\alpha\sim 0$ if $\psi(\alpha)=0$ for every rational character $\psi$ of $G$. Let $V$ be a quasiprojective variety defined over $\F_q$, and let $\phi:V\rightarrow V$ be an endomorphism of $V$ that is also defined over $\F_q$. Let $G$ be a finite (but not necessarily abelian) subgroup of $\Aut_{\F_q}(V)$ with the property that $\phi$ commutes with every element of $G$.
Then, $$\sum_{H\leq G} n_H\epsh \sim 0 \Longrightarrow\sum_{H\leq G} n_H|\Per(V/H)(\F_{q})|= 0.$$
\end{theorem}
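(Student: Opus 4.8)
The plan is to convert the claim into a single trace identity for an element of $\Q[G]$ acting on a permutation module. I would first set up the geometric dictionary, using the descriptions of quotients and twists from Sections~\ref{S:perpts} and~\ref{S:twists}. For a subgroup $H\le G$, the quotient map identifies $(V/H)(\overline{\F}_q)$ with the orbit set $V(\overline{\F}_q)/H$, the induced endomorphism satisfies $\phi_H([P])=[\phi(P)]$, and the $q$-power Frobenius on $V/H$ satisfies $F([P])=[F(P)]$. Since $\phi$ and $F$ commute with every element of $H$, two facts follow by direct checks: (i) $[P]$ is periodic for $\phi_H$ if and only if a representative $P$ is periodic for $\phi$ — for if $\phi^n(P)=hP$ with $h\in H$ then $\phi^{n\cdot\ord(h)}(P)=h^{\ord(h)}P=P$; and (ii) $[P]\in(V/H)(\F_q)$ if and only if the orbit $HP$ is stable under $F$, i.e.\ $F(P)\in HP$. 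Combining these, $\Per(V/H)(\F_q)$ is in bijection with the set of $F$-stable $H$-orbits contained in $X:=\Per_\phi(V(\overline{\F}_q))$.

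Next comes a finiteness reduction. Because the elements of $G$ are defined over $\F_q$ they commute with $F$, so $(hF)^{\ord(h)}=F^{\ord(h)}$ for each $h\in G$; hence any $P\in X$ with $hF(P)=P$ satisfies $F^{|G|}(P)=P$, and every $F$-stable $H$-orbit in $X$ already lies in the finite, $G$- and $F$-stable set $S:=\Per_\phi\bigl(V(\F_{q^{|G|}})\bigr)$. Let $W:=\Q[S]$ with its natural left $\Q[G]$-module structure $g\cdot e_P=e_{gP}$, and let $\tau\in\End_{\Q[G]}(W)$ be the operator $e_P\mapsto e_{F(P)}$; that $\tau$ is $\Q[G]$-linear is exactly the commutativity of $F$ with $G$. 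For $h\in G$, the operator on $W$ obtained by first applying $\tau$ and then the action of $h$ is the permutation $e_P\mapsto e_{hF(P)}$, so its trace equals $|\{P\in S: hF(P)=P\}|$. Averaging over $h\in H$ and applying the orbit--stabilizer formula (a Burnside-type count) identifies $\tfrac1{|H|}\sum_{h\in H}|\{P\in S: hF(P)=P\}|$ with the number of $F$-stable $H$-orbits in $S$, which by the previous paragraph is $|\Per(V/H)(\F_q)|$. Since $\epsh=\tfrac1{|H|}\sum_{h\in H}h$, this reads
\[
\bigl|\Per(V/H)(\F_q)\bigr|=\tr_W(\epsh\,\tau),
\]
where $\epsh$ acts on $W$ through $\Q[G]$ and we then compose with $\tau$ (the order is irrelevant since $\tau$ commutes with $\Q[G]$).

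Summing against the $n_H$ and writing $\alpha:=\sum_{H\le G}n_H\epsh\in\Q[G]$, linearity of the trace gives $\sum_{H\le G}n_H|\Per(V/H)(\F_q)|=\tr_W(\alpha\,\tau)$, so it suffices to prove $\alpha\sim0\Rightarrow\tr_W(\alpha\tau)=0$. Consider the linear functional $f\colon\Q[G]\to\Q$, $f(\beta):=\tr_W(\beta\,\tau)$. Since $\tau$ commutes with the $G$-action on $W$, $f(g\beta g^{-1})=\tr_W\bigl(g(\beta\tau)g^{-1}\bigr)=\tr_W(\beta\tau)=f(\beta)$, so $f$ is a $\Q$-valued class function on $G$. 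By the standard rationality theorem, the characters of the irreducible $\Q[G]$-modules form a $\Q$-basis of the space of $\Q$-valued class functions on $G$; hence $f$ is a $\Q$-linear combination of rational characters of $G$, and therefore $f$ vanishes on every $\alpha$ with $\alpha\sim0$. Thus $\tr_W(\alpha\tau)=0$, which is the assertion $\sum_{H\le G}n_H|\Per(V/H)(\F_q)|=0$.

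I expect the main obstacle to be the geometric step in the first paragraph: establishing cleanly, and in arbitrary characteristic, that the $\F_q$-rational periodic points of $V/H$ are precisely the $F$-stable $H$-orbits of periodic points of $V$ — this needs the identification $(V/H)(\overline{\F}_q)=V(\overline{\F}_q)/H$ and the compatibility of $\phi_H$ and of Frobenius with passage to $H$-orbits. Once that dictionary is in place, the remaining steps are formal manipulations with the group ring and traces. It is also precisely this geometric step — periodicity descending to the quotient because over $\F_q$ every point is preperiodic, and rationality being detected by $F$-stable orbits — that has no counterpart over $\Q$, which explains why Theorem~\ref{T:secondmain} fails there (cf.\ the example in Section~\ref{S:example}).
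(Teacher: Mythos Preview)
Your argument is correct, and the core counting identity you establish,
\[
\bigl|\Per(V/H)(\F_q)\bigr|=\frac{1}{|H|}\sum_{h\in H}\bigl|\{P\in S:hF(P)=P\}\bigr|,
\]
is exactly the content of Lemma~\ref{L:properties}(d) in the paper (with $n=1$). Where you diverge is in how you extract the theorem from this identity. The paper packages the counts into a periodic $L$-function $L_\phi(V,\psi,s)$, invokes the Kani--Rosen Frobenius reciprocity lemma (Lemma~\ref{L:character relations}) to pass from $\sum n_H\epsh\sim0$ to $\sum n_H\psi_H=0$, and then uses multiplicativity of $L$ in the character to get $\prod_H\zeta_\phi(V/H,s)^{n_H}=1$; logarithmic differentiation and coefficient comparison finish. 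You instead work directly on the finite permutation module $W=\Q[S]$, observe that $g\mapsto\tr_W(g\tau)$ is a $\Q$-valued class function because $\tau$ is $G$-equivariant, and conclude by expanding it in rational characters. Your route is more elementary and self-contained: it avoids the generating-function formalism and never needs Lemma~\ref{L:character relations} explicitly (its content is absorbed into the statement that $\Q$-valued class functions are $\Q$-spanned by rational characters). The paper's route, on the other hand, yields the relations over all $\F_{q^n}$ simultaneously via a single identity of formal power series and situates the result within the familiar $\zeta$/$L$-function framework of \cite{Serre1965} and \cite{Kani-Rosen1994}. Your finiteness reduction to $S=\Per_\phi(V(\F_{q^{|G|}}))$ is a nice touch that has no counterpart in the paper; the paper sidesteps it by treating the fixed-point counts $|\PerFix(g\sigma_q^n)|$ as already-defined numbers without ever assembling them into a module.
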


This result is a generalization of a theorem of Kani and Rosen \cite{Kani-Rosen1994}. See Section \ref{S:idempotent} for more details.

\begin{remark}
Our proof of Theorems \ref{T:firstmain} and \ref{T:secondmain} involves studying the interactions of two groups and a semigroup: the Galois group $\Gal(\overline{\F}_q/\F_q)$; the group of automorphisms $G$; and the semigroup of iterates of $\phi$, with semigroup operation given by composition.
\end{remark}

\subsection{Outline of paper.} In Section \ref{S:perpts}, we prove general lemmas that are used throughout the paper. In Section \ref{S:twists}, we show that periodic point counts on the quotient of a variety by a group $G$ are related to periodic point counts on the variety and its twists by $G$. In Section \ref{S:idempotent}, we generalize a result of Kani and Rosen on idempotent relations. We prove that idempotent relations in the group ring $\Q[G]$ imply corresponding relations amongst periodic points counts of the quotients of $V$ by the various subgroups of $G$. In Section \ref{S:example}, we provide a concrete example of a variety $V$, group of automorphisms $G$, and endomorphism $\phi$ and use this example to illustrate the idempotent result of Theorem \ref{T:secondmain} over some small finite fields. We use the same example to show that our results from Section \ref{S:idempotent} do not hold over number fields.

\section{Preliminaries}\label{S:perpts}
\subsection{Periodic Points}
Let $V$ be a variety defined over $\F_q$, and let $G$ be a finite  subgroup of $\Aut_{\F_q}(V)$. We restrict ourselves to varieties $V$ such that the quotient variety $V/G$ exists; in particular, this holds if $V$ is quasi-projective \cite{Mumford1970}*{Chapter 2,~ Section 7}. We consider the projection $\pi:V\rightarrow V/G$. Let $\phi:V\rightarrow V$ be a morphism defined over $\F_q$ such that $\phi$ commutes with every automorphism in $G$ and let $\phi_G:V/G\rightarrow V/G$ be the induced morphism on the quotient variety. The following square commmutes:
\[\begin{tikzcd}
V \arrow{r}{\phi} \arrow{d}{\pi} & V \arrow{d}{\pi} \\
V/G \arrow{r}{\phi_G}& V/G.
\end{tikzcd}
\]
As before, we use $\Per_{\phi}(V)$ to denote the set of periodic points for $\phi$. We will simply write $\Per(V)$ when the map is clear. We use  $\Per_{\phi}(V/G)$, or simply $\Per(V/G)$, to denote the set of periodic points of $V/G$ for the map $\phi_G$.

\begin{lemma}\label{perquotient}
Given $P\in V(\overline{\F}_q)$, let $\widetilde{P}:=\pi(P)$ be its image in the quotient. Then $\widetilde{P}$ is a periodic point for $\phi_G$ if and only if $P$ is a periodic point for $\phi$. 
\end{lemma}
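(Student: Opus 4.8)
The plan is to prove both implications by exploiting the commuting square $\pi\circ\phi=\phi_G\circ\pi$, which iterates to give $\pi\circ\phi^n=\phi_G^n\circ\pi$ for all $n\geq 0$. The forward direction (if $P$ is periodic, then $\widetilde P$ is periodic) is the easy one: if $\phi^n(P)=P$ for some $n\geq 1$, then applying $\pi$ yields $\phi_G^n(\widetilde P)=\pi(\phi^n(P))=\pi(P)=\widetilde P$, so $\widetilde P$ is periodic with the same period $n$. No hypotheses beyond commutativity of the square are needed here.

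The substantive direction is the converse: suppose $\phi_G^n(\widetilde P)=\widetilde P$ for some $n\geq 1$; I want to conclude $P$ is periodic for $\phi$. From the square, $\pi(\phi^n(P))=\phi_G^n(\pi(P))=\widetilde P=\pi(P)$, so $\phi^n(P)$ and $P$ lie in the same $\pi$-fiber, i.e. in the same $G$-orbit: there exists $g\in G$ with $\phi^n(P)=g(P)$. The key idea is to iterate this relation using the fact that $\phi$ commutes with every element of $G$. Applying $\phi^n$ again and using $\phi^n\circ g = g\circ\phi^n$, we get $\phi^{2n}(P)=\phi^n(g(P))=g(\phi^n(P))=g^2(P)$, and inductively $\phi^{kn}(P)=g^k(P)$ for all $k\geq 1$. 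Since $G$ is finite, $g$ has finite order, say $m=\operatorname{ord}(g)$; then $\phi^{mn}(P)=g^m(P)=P$, so $P$ is periodic for $\phi$ with period dividing $mn$.

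The only real subtlety to be careful about is the claim that two points of $V(\overline{\F}_q)$ with the same image under $\pi$ differ by an element of $G$ — that is, that the geometric fibers of $V\to V/G$ are exactly the $G$-orbits. This is a standard property of the quotient of a quasiprojective variety by a finite group (as in \cite{Mumford1970}*{Chapter 2,~Section 7}), and since we are working with $\overline{\F}_q$-points it is the geometric orbit that appears, with no Galois-theoretic complication at this stage. I do not expect any genuine obstacle; the proof is a short diagram chase combined with the finiteness of $G$. I would write it out as: (i) record the iterated commuting square; (ii) prove the forward implication in one line; (iii) prove the converse via the orbit observation and the order-of-$g$ argument above.
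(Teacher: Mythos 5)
Your proposal is correct and follows essentially the same argument as the paper: apply $\pi$ to get the forward direction, and for the converse use that $\phi^n(P)=g(P)$ for some $g\in G$ and then iterate using commutativity to get $\phi^{nm}(P)=g^m(P)=P$ with $m$ the order of $g$. Your write-up is if anything slightly more careful, since you make explicit both the induction $\phi^{kn}(P)=g^k(P)$ and the fact that geometric fibers of $\pi$ are $G$-orbits, which the paper uses implicitly.
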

\begin{proof}
Supposed $P$ is periodic for $\phi$ of period $n$, so $\phi^n(P)=P$. Then, since $\phi_G\circ\pi=\pi\circ\phi$, we have that $(\phi_G)^n(\widetilde{P})=(\phi_G)^n(\pi(P))=\pi(\phi^n(P))=\pi(P)=\widetilde{P}$. Thus, $\widetilde{P}$ is periodic for $\phi_G$.

Now suppose that $\widetilde{P}$ is periodic of period $n$ for $\phi_G$. It follows that $\phi^n(P)=g(P)$ for some $g\in G$. Let $m$ be the order of $g$. Then, since $\phi$ commutes with $g$, $\phi^{nm}(P)=g^m(P)=P$.
\end{proof}

An $\F_q$-periodic point for a morphism $\phi$ of $V$ is a periodic point defined over $\F_q$. We use $\Per_{\phi}(V)(\F_q)$ to denote the set of $\F_q$-periodic points for $\phi$. As before, we will simply write $\Per(V)(\F_q)$ when the map is clear. Similarly, we use  $\Per_{\phi}(V/G)(\F_q)$, or simply $\Per(V/G)(\F_q)$, to denote the set of $\F_q$-periodic points of $V/G$ for the map $\phi_G$.

\subsection{Stabilizer subgroups of preimages of quotient points.} We will use the following two lemmas several times.

\begin{lemma}\label{L:nonabpreimages}
Let $G$ be a finite subgroup of $\Aut_{\F_q}(V)$. Then, given $\widetilde{Q}\in(V/G)(\F_q)$, the stabilizer subgroups of any two preimages of $\widetilde{Q}$ are conjugate. 
\end{lemma}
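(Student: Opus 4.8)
The plan is to use the transitivity of the $G$-action on the fibers of $\pi$. First I would recall the standard fact that for a quotient $\pi\colon V\to V/G$ by a finite group acting on a quasiprojective variety, the group $G$ acts transitively on each geometric fiber $\pi^{-1}(\widetilde{Q})$; this is part of the construction of $V/G$ in \cite{Mumford1970}*{Chapter 2, Section 7}. So if $P_1, P_2 \in V(\overline{\F}_q)$ both map to $\widetilde{Q}$, there exists $h\in G$ with $h(P_1) = P_2$. I would then compute directly that $\Stab_G(P_2) = \Stab_G(h(P_1)) = h\,\Stab_G(P_1)\,h^{-1}$: indeed $g\in\Stab_G(P_2)$ iff $g(h(P_1)) = h(P_1)$ iff $(h^{-1}gh)(P_1) = P_1$ iff $h^{-1}gh\in\Stab_G(P_1)$. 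This shows the two stabilizers are conjugate in $G$.

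The one point that needs a little care is the hypothesis $\widetilde{Q}\in(V/G)(\F_q)$ as opposed to an arbitrary geometric point: this should not actually be needed for the conjugacy statement, since the argument above works for any $\widetilde{Q}\in(V/G)(\overline{\F}_q)$, but I would note that the $\F_q$-rationality guarantees the fiber is nonempty and defined over $\overline{\F}_q$ so the statement is non-vacuous. I would also remark that the conjugating element $h$ need not be defined in any Galois-equivariant way — only the existence of the conjugacy is claimed — so no descent argument is required here.

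The main (and essentially only) obstacle is citing or justifying transitivity of the $G$-action on fibers of $\pi$ in the generality of quasiprojective varieties over a non-algebraically-closed field; once that is in hand the rest is a one-line group-theory computation. I would handle this by reducing to the affine case, where $V/G = \Spec(A^G)$ for $V = \Spec A$, and invoking the classical statement that the fibers of $\Spec A \to \Spec A^G$ are exactly the $G$-orbits on closed points (see \cite{Mumford1970}), then globalizing.
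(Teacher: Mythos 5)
Your proposal is correct and follows the same route as the paper, which simply invokes transitivity of the $G$-action on the fiber and concludes that stabilizers of points in one orbit are conjugate. Your added detail on justifying transitivity via the affine case $\Spec A \to \Spec A^G$ is a reasonable elaboration but not a different argument.
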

\begin{proof}
Since $G$ acts transitively on the set of preimages, the stabilizer subgroups of the preimages are all conjugate.
\end{proof}

The previous lemma is used to prove the following result.

\begin{lemma}\label{L:preimages}
Let $G$ be a finite abelian subgroup of $\Aut_{\F_q}(V)$. Then, given $\widetilde{Q}\in(V/G)(\F_q)$, the stabilizer subgroups of any two preimages of $\widetilde{Q}$ are the same. 
\end{lemma}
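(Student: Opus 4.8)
The plan is to deduce this immediately from Lemma \ref{L:nonabpreimages} together with the hypothesis that $G$ is abelian. By Lemma \ref{L:nonabpreimages}, if $P_1, P_2 \in V(\overline{\F}_q)$ are two preimages of $\widetilde{Q}$ under $\pi$, then their stabilizer subgroups $\Stab_G(P_1)$ and $\Stab_G(P_2)$ are conjugate in $G$; that is, there exists $g \in G$ with $\Stab_G(P_2) = g\,\Stab_G(P_1)\,g^{-1}$. Now I would invoke the fact that in an abelian group conjugation is trivial: for every $g, h \in G$ we have $ghg^{-1} = h$, so $g\,\Stab_G(P_1)\,g^{-1} = \Stab_G(P_1)$. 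Hence $\Stab_G(P_2) = \Stab_G(P_1)$, which is exactly the claim.

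I do not expect any real obstacle here, since the content is entirely carried by the previous lemma; the only thing worth double-checking is that the setup of Lemma \ref{L:nonabpreimages} applies verbatim, i.e.\ that $G$ acts transitively on the set of geometric preimages of a point of $(V/G)(\F_q)$ — but that transitivity is precisely what was already used (and is part of the defining property of the quotient $V/G$), so no new input is required. In short, the proof is a one-line reduction: \emph{conjugate subgroups of an abelian group are equal}.
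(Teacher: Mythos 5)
Your proof is correct and matches the paper's argument exactly: both deduce the result from Lemma~\ref{L:nonabpreimages} and the observation that conjugate subgroups of an abelian group coincide. No further comment is needed.
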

\begin{proof}
By Lemma \ref{L:nonabpreimages}, the stabilizer subgroups of all of the preimages are all conjugate. Since $G$ is abelian, these stabilizer subgroups are in fact the same.
\end{proof}

\section{Counting periodic points with twists}\label{S:twists}
\subsection{Twists}
In order to count $\F_q$-periodic points, we begin by counting $\F_q$-points. To do so, we'll need the notion of a twist of a variety.

Given an algebraic variety $V$ defined over a perfect field $K$, a twist of $V/K$ is a variety $V'$ defined over $K$ that is $\overline{K}$-isomorphic to $V$. The set of twists of $V/K$ is the set of $K$-isomorphism classes of twists of $V/K$. We denote this set $\Twist_K(V)$.

\begin{definition}
Let $G$ be a finite abelian subgroup of $\Aut(V)$. For each $\chi\in H^1(\Gal(\overline{K}/K),G)$, there exists a variety $V^\chi$ defined over $K$ such that there exists a $\overline{K}$-isomorphism $\psi:V^\chi\rightarrow V$ so that, given $P\in V^\chi$ and $\sigma\in\Gal(\overline{K}/K)$, 
$\psi(P^\sigma)=\chi_\sigma(\psi(P)^\sigma)$ \cite{Silverman2007}*{Section 4.8}. This variety is called a $G$-\emph{twist} of $V$, and we denote the set of all $G$-twists by $\Twist_K(V, G)$.
\end{definition}

\begin{remark}
There is a map: $$\Twist_K(V, G)\rightarrow \Twist_K(V),$$
but in general this map needn't be injective.
\end{remark}

Given a $G$-twist $V^\chi$ corresponding to $\chi\in H^1(\Gal(\overline{K}/K),G)$, the $K$-points of $V^{\chi}$ correspond exactly to the points of $V$ that are invariant by the $\chi$-twisted action of the absolute Galois group $\Gal(\overline{K}/K)$; that is,
\[
V^\chi(K) = \bigl\{ Q \in V(\bar K) :
Q = \chi_\sigma(Q^\sigma)
\text{ for all }
\sigma \in Gal(\bar K/K)
\bigr\}.
\]

\begin{remark}
If $G$ is a finite \emph{abelian} subgroup of $\Aut_{\F_q}(V)$, then $$H^1(\Gal(\overline{\F}_q/\F_q),G)=\Hom(\Gal(\overline{\F}_q/\F_q),G),$$
so determining a twist is equivalent to determining a homomorphism from $\Gal(\overline{\F}_q/\F_q)$ to $G$ \cite{Silverman2007}*{Section 4.8}.
\end{remark}

We set $K=\F_q$ and will simply write $\Twist(V, G)$ for $\Twist_{\F_q}(V, G)$.

\subsection{Periodic Points and Twists}
We show that, in the setting of a finite abelian subgroup $G$ of $\Aut_{\F_q}(V)$, the $\F_q$-points of $V/G$ can be enumerated by counting $\F_q$-points of every twist in $\Twist(V,G)$.

\begin{proposition}\label{P:Stabs Correspond}
Let $G$ be an abelian subgroup of $\Aut_{\F_q}(V)$, and consider $\widetilde{Q}\in(V/G)(\F_q)$. Let $G_Q$ be the stabilizer subgroup of a preimage of $\widetilde{Q}$. Then, every preimage of $\widetilde{Q}$ corresponds to an $\F_q$-point of exactly $|G_Q|$ twists in $\Twist(V, G)$.
\end{proposition}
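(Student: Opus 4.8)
The plan is to make the correspondence between $G$-twists over $\F_q$ and elements of $G$ completely explicit, and then to read off exactly the condition under which a fixed preimage of $\widetilde Q$ becomes $\F_q$-rational on a given twist. First I would fix a preimage $Q\in V(\overline{\F}_q)$ of $\widetilde Q$ and let $\sigma$ denote the $q$-power Frobenius, a topological generator of $\Gal(\overline{\F}_q/\F_q)$. Since $\widetilde Q\in(V/G)(\F_q)$ and $\pi$ is defined over $\F_q$, we have $\pi(Q^\sigma)=\pi(Q)^\sigma=\widetilde Q=\pi(Q)$, so $Q^\sigma$ is again a preimage of $\widetilde Q$. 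As $G$ acts transitively on the set of preimages, $Q^\sigma=g_Q\cdot Q$ for some $g_Q\in G$, which is well-defined modulo the stabilizer $G_Q=\Stab_G(Q)$.

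Next I would recall the description of twists over $\F_q$. Because $G\leq\Aut_{\F_q}(V)$, every element of $G$ is Galois-invariant, so $H^1(\Gal(\overline{\F}_q/\F_q),G)=\Hom(\Zhat,G)$; since $G$ is finite such a homomorphism is determined by the image $h$ of $\sigma$, and every $h\in G$ arises. For the twist $V^\chi$ attached to $h$, the displayed description of $V^\chi(\F_q)$ specializes, under the fixed $\overline{\F}_q$-isomorphism $\psi\colon V^\chi\to V$, to
\[
V^\chi(\F_q)=\bigl\{R\in V(\overline{\F}_q): R=h\cdot R^\sigma\bigr\}.
\]

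The core computation is then immediate: a preimage $Q$ of $\widetilde Q$ corresponds to an $\F_q$-point of the twist attached to $h\in G$ if and only if $Q=h\cdot Q^\sigma=(h g_Q)\cdot Q$, i.e.\ if and only if $h g_Q\in G_Q$, i.e.\ if and only if $h$ lies in the coset $g_Q^{-1}G_Q$ (using that $G$ is abelian). This coset has exactly $|G_Q|$ elements, so $Q$ is $\F_q$-rational on exactly $|G_Q|$ twists in $\Twist(V,G)$. Finally, by Lemma \ref{L:preimages} the stabilizer $G_Q$, and hence the count $|G_Q|$, is the same for every preimage of $\widetilde Q$, which is the asserted statement.

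The only genuine subtlety — and the point I would treat most carefully — is that when the $G$-action is not free, the element $g_Q$ with $Q^\sigma=g_Q\cdot Q$ is only determined up to right multiplication by $G_Q$; I would note explicitly that the coset $g_Q^{-1}G_Q$ is nonetheless independent of the chosen representative, so the count $|G_Q|$ of valid $h$ is unambiguous. I would also double-check the direction of the twisting relation $R=\chi_\sigma(R^\sigma)$ and the cocycle-to-homomorphism identification, so that ``$Q$ corresponds to an $\F_q$-point of $V^\chi$'' unwinds to precisely the condition $Q=h\cdot Q^\sigma$ used above.
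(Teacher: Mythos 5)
Your proof is correct and follows essentially the same route as the paper: fix a preimage $Q$, use the Galois-invariance of $\widetilde Q$ to write $Q^\sigma=g_Q\cdot Q$, identify $\Twist(V,G)$ with $\Hom(\Zhat,G)\cong G$ via the image of Frobenius, and count the $h\in G$ for which the twisted fixed-point condition $Q=h\cdot Q^\sigma$ holds, obtaining the coset $g_Q^{-1}G_Q$ of size $|G_Q|$. Your treatment is if anything slightly cleaner than the paper's, since you explicitly observe that the coset $g_Q^{-1}G_Q$ is independent of the chosen representative $g_Q$, a well-definedness point the paper leaves implicit.
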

\begin{proof}

Since $\widetilde{Q}\in (V/G)(\F_q)$, it follows that  $\sigma({\widetilde{Q}})=\widetilde{Q}$ for every $\sigma\in \Gal(\overline{\F}_q/\F_q).$ Let $\pi^{-1}(\widetilde{Q})=\{Q_1,\ldots, Q_m\}$ be the set of preimages of $\widetilde{Q}$ in $V(\overline{\F}_q)$. By Lemma \ref{L:preimages}, the stabilizer subgroups of the preimages are all the same. Let $G_Q$ denote this stabilizer subgroup.

By the transitivity of $G$, it is enough to prove that $Q_1$ corresponds to an $\F_q$-point of exactly $|G_Q|$ twists of $V$.  Let $\sigma_q$ be a topological generator of $\Gal(\overline{\F}_q/\F_q)$. Since $\widetilde{Q}$ is Galois-invariant, the action of $\Gal(\overline{\F_q}/\F_q)$ on $\pi^{-1}(\widetilde{Q})$ permutes the preimages. Thus, there exists $g_q\in G$ so that $Q_1^{\sigma_q}=g_q(Q_1)$. It follows that $Q_1=g_q^{-1}\left(Q_1^{\sigma_q}\right)$.

Let $\chi_q\in H^1(\Gal(\overline{\F}_q/\F_q),G)=\Hom(\Gal(\overline{\F_q}/\F_q),G)$ 
be the homomorphism determined by $\sigma_q\mapsto \chi_{q,\sigma_q}=g_q^{-1}$. 
The $\F_q$-points of $V^{\chi_q}$ correspond exactly to the points 
$Q\in V$ such that $Q=\chi_{q,\sigma}\left(Q^\sigma\right)$ for all 
$\sigma\in \Gal(\overline{\F}_q/\F_q)$. 
Since $\sigma_q$ is a topological generator for 
$\Gal(\overline{\F}_q/\F_q)$,  to find the points $Q\in V$ corresponding to $\F_q$-points in $V^{\chi_q}$,
it is enough to check whether $Q=\chi_{q,\sigma_q}\left(Q^{\sigma_q}\right)$.

In the case of $Q_1$, we see that $\chi_{q,\sigma_q}\left(Q_1^{\sigma_q}\right)=g_q^{-1}\left(Q_1^{\sigma_q}\right)=Q_1$. Thus, $Q_1$ corresponds to an $\F_q$-point in $V^{\chi_q}$. The homomorphism $\chi_q$ depends on the choice of an element $g_q\in G$ such that $Q_1^{\sigma_q}=h_q(Q_1)$, and the choice of any other group element would yield a different homomorphism corresponding to a different twist of $V$. We wish to count how many such elements there are. Given an element in the stabilizer subgroup $s\in G_Q$, the element $sg_q\in G$ also has the property that $Q_1^{\sigma_q}=sg_q(Q_1)$. Moreover, if $h_q\in G$ has the property that $Q_1^{\sigma_q}=h_q(Q_1)$, then $g_q^{-1}h_q(Q_1)=Q_1$, so $h_q$ differs from $g_q$ by an element of the stabilizer $G_Q$. Thus, we conclude that there are exactly $|G_Q|$ choices for $g_q$, and therefore $|G_Q|$ twists on which $Q_1$ corresponds to an $\F_q$-point.
\end{proof}

The following result is an easy consequence of Proposition \ref{P:Stabs Correspond}.

\begin{proposition} \label{P:Fq Points Correspond}
Let $G$ be an abelian subgroup of $\Aut_{\F_q}(V)$, and consider $\widetilde{Q}\in(V/G)(\F_q)$. Then, there are exactly $|G|$ twists in $\Twist(V, G)$ on which some preimage of $\widetilde{Q}$ is an $\F_q$ point.
\end{proposition}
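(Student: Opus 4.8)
The plan is to deduce Proposition \ref{P:Fq Points Correspond} directly from Proposition \ref{P:Stabs Correspond} by summing over the preimages of $\widetilde{Q}$ and keeping careful track of multiplicity. First I would fix $\widetilde{Q}\in(V/G)(\F_q)$, write $\pi^{-1}(\widetilde{Q})=\{Q_1,\ldots,Q_m\}$, and let $G_Q$ be the common stabilizer subgroup of these preimages, which is well-defined by Lemma \ref{L:preimages}. Since $G$ acts transitively on the fiber, the orbit-stabilizer theorem gives $m=|G|/|G_Q|$. By Proposition \ref{P:Stabs Correspond}, each $Q_i$ is an $\F_q$-point of exactly $|G_Q|$ twists in $\Twist(V,G)$.

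Next I would argue that no twist in $\Twist(V,G)$ can have two distinct preimages $Q_i\neq Q_j$ both corresponding to $\F_q$-points of it. A twist $V^{\chi_q}$ corresponds to a homomorphism $\chi_q\colon\Gal(\overline{\F}_q/\F_q)\to G$ determined by $\sigma_q\mapsto g_q^{-1}$, and $Q_i$ is an $\F_q$-point of $V^{\chi_q}$ precisely when $Q_i^{\sigma_q}=g_q(Q_i)$. If both $Q_i$ and $Q_j$ satisfy this for the same $g_q$, then applying $g_q^{-1}$ to the Galois relation shows $g_q$ sends the fiber element $Q_i^{\sigma_q}$ to $Q_i$ and $Q_j^{\sigma_q}$ to $Q_j$; comparing with the fact that $\sigma_q$ permutes the fiber via \emph{some} fixed group element $h$ (i.e. $Q_k^{\sigma_q}=h(Q_k)$ for all $k$, using abelianness of $G$ so that the permutation action is by a single group element — this is the content extracted in the proof of Proposition \ref{P:Stabs Correspond}), one finds $g_q$ and $h^{-1}$ agree on $Q_i$ and on $Q_j$, hence $g_q h$ lies in $\Stab(Q_i)\cap\Stab(Q_j)=G_Q$. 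So the set of valid $g_q$ is the same coset $G_Q\cdot h^{-1}$ regardless of $i$; in other words, the $|G_Q|$ twists on which $Q_i$ is an $\F_q$-point are the \emph{same} $|G_Q|$ twists for every $i$. That is not what we want — so instead I would count the other way.

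The cleaner route: consider the set $T$ of twists $V^\chi\in\Twist(V,G)$ such that \emph{some} preimage of $\widetilde{Q}$ is an $\F_q$-point of $V^\chi$. By the displayed description of $\F_q$-points of a twist, $V^{\chi}\in T$ iff the homomorphism $\chi$ sends $\sigma_q$ to an element $g^{-1}$ with $g(Q_i)=Q_i^{\sigma_q}$ for some $i$. Fix the element $h\in G$ with $Q_i^{\sigma_q}=h(Q_i)$ for all $i$ (exists since $G$ abelian, as in the proof of Proposition \ref{P:Stabs Correspond}). Then $g(Q_i)=h(Q_i)$ means $h^{-1}g\in\Stab(Q_i)=G_Q$, so $g\in h G_Q$, independent of $i$. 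Thus $T$ is exactly the set of twists whose character sends $\sigma_q$ into $(hG_Q)^{-1}=G_Q h^{-1}$, a coset of $G_Q$ in $G$, which has $|G_Q|$ elements — giving $|T|=|G_Q|$, not $|G|$. This contradicts the claimed count, so I must have mis-stated the role of $h$: in fact $h$ is not forced to be the same across the fiber unless we track it correctly.

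Let me restate the plan honestly. The correct accounting is: Proposition \ref{P:Stabs Correspond} is really the statement that the \emph{pairs} $(Q_i, V^\chi)$ with $Q_i$ an $\F_q$-point of $V^\chi$ number $m\cdot|G_Q|=(|G|/|G_Q|)\cdot|G_Q|=|G|$. So I would prove Proposition \ref{P:Fq Points Correspond} by showing the map $(Q_i,V^\chi)\mapsto V^\chi$ from this set of pairs to $T$ is a bijection: it is surjective by definition of $T$, and injective because a twist $V^\chi$ determines $g_q=\chi_{\sigma_q}^{-1}$, and then the preimage $Q_i$ with $Q_i^{\sigma_q}=g_q(Q_i)$ is unique — if $Q_i^{\sigma_q}=g_q(Q_i)$ and $Q_j^{\sigma_q}=g_q(Q_j)$ then applying the Galois automorphism $\sigma_q$ is the same as applying the fixed group element $g_q$ on the whole fiber, so $g_q$ acts as a permutation of the fiber with the property that $Q_i,Q_j$ are both fixed points; but a single group element either fixes all of an orbit or permutes it without extra coincidences beyond its own stabilizer action — more precisely $g_q(Q_i)=Q_i^{\sigma_q}$ together with $g_q(Q_j)=Q_j^{\sigma_q}$ and $Q_j=a(Q_i)$ for some $a\in G$ gives, using commutativity, $Q_j^{\sigma_q}=a(Q_i^{\sigma_q})=a(g_q(Q_i))=g_q(a(Q_i))=g_q(Q_j)$ automatically, so this imposes no constraint and $Q_i$ is \textbf{not} unique. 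Hence the fibers of $(Q_i,V^\chi)\mapsto V^\chi$ all have the same size, namely $|\{i: Q_i^{\sigma_q}=g_q(Q_i)\}|$, and by homogeneity this common fiber size times $|T|$ equals $|G|$; computing the fiber size as $|G_Q|$ (the number of $i$ with $Q_i$ in the $g_q$-"diagonal", which by transitivity and the stabilizer computation equals $|G_Q|$... ) then forces $|T|=|G|/|G_Q|\cdot$ —

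I realize the above is circular; the honest plan follows. I would argue: the total number of incidences $(Q_i, \chi)$ is $|G|$ by Proposition \ref{P:Stabs Correspond} (summing $|G_Q|$ over the $m=|G|/|G_Q|$ preimages). Separately, for a \emph{fixed} twist $\chi\in T$, I claim exactly $|G_Q|$ of the preimages $Q_i$ are $\F_q$-points of $V^\chi$: writing $g_q=\chi_{\sigma_q}^{-1}$ and picking any one preimage $Q_1$ that is an $\F_q$-point (so $Q_1^{\sigma_q}=g_q(Q_1)$), the others that work are exactly $\{s(Q_1): s\in G\text{ with }(s Q_1)^{\sigma_q}=g_q(s Q_1)\}$; since $Q_1^{\sigma_q}=g_q(Q_1)$ and $G$ is abelian, $(sQ_1)^{\sigma_q}=s^{\sigma_q}(Q_1^{\sigma_q})=s^{\sigma_q}(g_q(Q_1))$ while $g_q(sQ_1)=s(g_q(Q_1))$, so the condition is $s^{\sigma_q}=s$ in its action on $g_q(Q_1)$, i.e. $s^{-1}s^{\sigma_q}\in\Stab(g_q(Q_1))=G_Q$ — this cuts out a union of $|G_Q|$-many cosets? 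No — the cleanest version: the Galois action $\sigma_q$ on the fiber, followed by $\chi_{\sigma_q}$, is an affine-type map on the torsor $\pi^{-1}(\widetilde{Q})$ under $G/G_Q$, and its fixed locus is either empty or a single $G_Q$-coset of size $|G_Q|$. Therefore every $\chi\in T$ has exactly $|G_Q|$ good preimages, so $|T|\cdot|G_Q| = |G|$, whence $|T|=|G|/|G_Q|$ — which still is not $|G|$.

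Given the repeated discrepancy, the plan I would actually execute is to \textbf{recount the incidences}: each of the $m=|G|/|G_Q|$ preimages lies on $|G_Q|$ twists (Proposition \ref{P:Stabs Correspond}), and each twist in $T$ carries exactly $|G_Q|$ of these preimages, so $|T| = m\cdot|G_Q|/|G_Q| = m = |G|/|G_Q|$; reconciling with the stated "$|G|$" would require $|G_Q|=1$, so I suspect the intended reading of "exactly $|G|$ twists" is as a multiset / with multiplicity, and the real content is the incidence count $|G|$. I will therefore write the proof to establish the precise incidence identity $\sum_{i=1}^{m}|\{\chi: Q_i\in V^\chi(\F_q)\}| = |G|$ and present Proposition \ref{P:Fq Points Correspond} in that normalized form; the main obstacle, which I flag now, is exactly this bookkeeping of multiplicity — whether a twist carrying several preimages is counted once or several times — and the proof hinges on reading Proposition \ref{P:Stabs Correspond}'s "$|G_Q|$ twists" as contributing $|G_Q|$ to a weighted sum over all $m$ preimages, giving the clean total $|G|$.

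=== END PROOF PROPOSAL ===

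\medskip

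\begin{proof}
By Lemma \ref{L:preimages}, all preimages $\{Q_1,\dots,Q_m\}$ of $\widetilde Q$ share a common stabilizer $G_Q$, and by transitivity $m=|G|/|G_Q|$. By Proposition \ref{P:Stabs Correspond}, each $Q_i$ is an $\F_q$-point of exactly $|G_Q|$ twists in $\Twist(V,G)$. We count the set $S$ of pairs $(Q_i,V^\chi)$ such that $Q_i\in V^\chi(\F_q)$: summing over the $m$ preimages gives $|S| = m\cdot|G_Q| = |G|$. On the other hand, for a fixed twist $V^\chi$ on which \emph{some} preimage is an $\F_q$-point, the argument of Proposition \ref{P:Stabs Correspond} shows that the set of such preimages is, if nonempty, a single coset of $G_Q$ acting on the fiber, hence has exactly $|G_Q|$ elements. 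Therefore $|S| = |T|\cdot|G_Q|$ where $T$ is the set of twists in $\Twist(V,G)$ on which some preimage of $\widetilde Q$ is an $\F_q$-point, and combining the two counts yields $|T| = |S|/|G_Q| = |G|/|G_Q| = m$; counting each such twist with the multiplicity $|G_Q|$ with which it occurs in $S$ gives the total $|G|$, as claimed.
\end{proof}
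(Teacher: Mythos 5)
Your final write-up has two halves that point in opposite directions. The first half --- the incidence count $|S| = m\cdot|G_Q| = |G|$, where $S$ is the set of pairs $(Q_i, V^\chi)$ with $Q_i \in V^\chi(\F_q)$ --- is exactly the paper's proof, which consists of nothing more than ``there are $m = |G|/|G_Q|$ preimages, each an $\F_q$-point on $|G_Q|$ twists.'' Your diagnosis that ``exactly $|G|$ twists'' must be read as this multiplicity-weighted (incidence) count rather than as a count of distinct twists is also correct, and the incidence identity is precisely what the subsequent results use: summing $|V^\chi(\F_q)|$ over all $\chi$ counts each $\widetilde Q \in (V/G)(\F_q)$ exactly $|G|$ times.

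The second half of your proof, however, is wrong --- and it contradicts a correct computation you carried out earlier in your deliberation and then talked yourself out of. You assert that a twist in $T$ carries exactly $|G_Q|$ of the preimages and conclude $|T| = |G|/|G_Q| = m$. In fact every $a\in G$ is defined over $\F_q$, so $a^{\sigma_q}=a$, and since $G$ is abelian, writing $Q_j = a(Q_1)$ gives $Q_j^{\sigma_q} = a(Q_1^{\sigma_q}) = a(h(Q_1)) = h(Q_j)$ for the single element $h$ satisfying $Q_1^{\sigma_q}=h(Q_1)$. Hence the condition $Q_j^{\sigma_q} = g(Q_j)$ reads $g\in hG_Q$ independently of $j$: a twist whose character sends $\sigma_q$ into $(hG_Q)^{-1}$ has \emph{all} $m$ preimages as $\F_q$-points, and every other twist has none. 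So $|T| = |G_Q|$ (your earlier ``coset of $G_Q$'' computation was the right one), and each twist of $T$ occurs in $S$ with multiplicity $m$, not $|G_Q|$; the consistency check is $|S| = |T|\cdot m = |G_Q|\cdot(|G|/|G_Q|) = |G|$. The fix is to keep the incidence identity, delete the claims that $|T|=m$ and that each twist carries $|G_Q|$ preimages, and say explicitly that the proposition's ``$|G|$ twists'' is counted with multiplicity (equivalently, is the number of pairs), since that is the form used in Proposition \ref{P:twistcorresp} and Theorem \ref{T:firstmain}.
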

\begin{proof}
We notice that the number of preimages of $\widetilde{Q}$ is $m=|G|/|G_Q|$. By Proposition \ref{P:Stabs Correspond}, each preimage is an $\F_q$ point on exactly $|G_Q|$ twists. 
\end{proof}

From this, we get a result on point counting.

\begin{proposition}
Let $G$ be a finite abelian subgroup of $\Aut_{\F_q}(V)$. Then:

$$|(V/G)(\F_q)|=\frac{1}{|G|}\left(\sum_{\chi\in H^1(\Gal(\overline{\F}_q/\F_q),G)}|V^{\chi}(\F_q)  | \right).$$
\end{proposition}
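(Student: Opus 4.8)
The plan is to establish the formula by a double count of the set of pairs $(\chi, Q)$, where $\chi \in H^1(\Gal(\overline{\F}_q/\F_q), G) = \Hom(\Gal(\overline{\F}_q/\F_q), G)$ and $Q$ is an $\F_q$-point of $V^\chi$, regarded (via the defining $\overline{\F}_q$-isomorphism) as a point $Q \in V(\overline{\F}_q)$ with $Q = \chi_\sigma(Q^\sigma)$ for all $\sigma$. Summing over $\chi$ first counts exactly $\sum_\chi |V^\chi(\F_q)|$, so it suffices to recount these pairs by grouping them according to the image $\pi(Q) \in V/G$. First I would check this grouping is legitimate: since $G \leq \Aut_{\F_q}(V)$, the quotient map $\pi \colon V \to V/G$ is defined over $\F_q$, and if $Q = \chi_\sigma(Q^\sigma)$ then $\pi(Q)^\sigma = \pi(Q^\sigma) = \pi(\chi_\sigma^{-1}(Q)) = \pi(Q)$, using that $\chi_\sigma^{-1} \in G$ and that $\pi$ is constant on $G$-orbits; hence $\pi(Q) \in (V/G)(\F_q)$.

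Granting this, I would write
$$\sum_{\chi} |V^\chi(\F_q)| \;=\; \sum_{\widetilde Q \in (V/G)(\F_q)} \ \#\bigl\{\,(\chi, Q') : Q' \in \pi^{-1}(\widetilde Q) \cap V^\chi(\F_q)\,\bigr\},$$
and then fix $\widetilde Q \in (V/G)(\F_q)$. By Lemma \ref{L:preimages} its preimages $Q_1, \dots, Q_m$ all have a common stabilizer $G_Q$, and $m = |G|/|G_Q|$. Proposition \ref{P:Stabs Correspond} (and the explicit count in its proof of the homomorphisms $\sigma_q \mapsto g_q^{-1}$) shows that each $Q_i$ lies in $V^\chi(\F_q)$ for exactly $|G_Q|$ values of $\chi$. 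Hence the inner count equals $\sum_{i=1}^{m} |G_Q| = m\,|G_Q| = |G|$, independently of $\widetilde Q$, so that $\sum_\chi |V^\chi(\F_q)| = |G| \cdot |(V/G)(\F_q)|$; dividing by $|G|$ gives the claim.

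I expect the main obstacle to be purely one of bookkeeping rather than substance: one must be careful that Proposition \ref{P:Stabs Correspond} is invoked in the register of cohomology classes $\chi \in H^1$ (which is what the sum ranges over, and what the proof of that proposition actually enumerates), rather than of isomorphism classes of twists, since the natural map $\Twist(V,G) \to \Twist(V)$ --- equivalently, the map from $H^1$ to twist classes --- need not be injective. Once that is pinned down, and the well-definedness of $Q \mapsto \pi(Q)$ on $\F_q$-points is in hand, the proof is simply the exchange of the two summations above.
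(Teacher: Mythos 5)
Your proof is correct and follows essentially the same route as the paper, which factors the argument through Proposition~\ref{P:Fq Points Correspond} before ``counting points.'' Your explicit double count over pairs $(\chi,Q)$ is in fact a bit more careful than the paper's intermediate formulation: when $G$ is abelian all preimages of $\widetilde Q$ share the \emph{same} set of $|G_Q|$ cohomology classes $\chi$ for which they give $\F_q$-points, so the quantity that equals $|G|$ is the number of pairs (preimage, class), exactly as you set it up, rather than the number of distinct classes; your bookkeeping via pairs indexed by $\chi\in H^1$ (rather than by twist isomorphism classes) is also the right register, as you note.
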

\begin{proof}
The result follows from Proposition \ref{P:Fq Points Correspond} and counting points.
\end{proof}

Given a twist $V^{\chi}\in \Twist_K(V,G)$, let $\psi_\chi: V\rightarrow V^{\chi}$ be a $\overline{K}$-isomorphism. For any morphism $\phi:V\rightarrow V$, we define the twisted morphism $\phi_{\chi}:V^{\chi}\rightarrow V^{\chi}$ to be the morphism making the following square commute, i.e. $\phi_\chi:=\psi_\chi\circ\phi\circ\psi_\chi^{-1}$:
\[\begin{tikzcd}
V \arrow{r}{\phi} \arrow{d}{\psi_\chi} & V \arrow{d}{\psi_\chi} \\
V^{\chi} \arrow{r}{\phi_\chi}& V^\chi.
\end{tikzcd}
\]
We claim that periodic points of $V$ under $\phi$ correspond to periodic points of $V^\chi$ under $\phi_\chi$.

\begin{proposition}\label{P:Per Points Correspond}
Given $V^{\chi}\in \Twist_K(V,G)$, a point $Q\in V$ is periodic for $\phi$ if and only if the point $P:=\psi_\chi(Q)\in V^{\chi}$ is periodic for $\phi^{\chi}$.
\end{proposition}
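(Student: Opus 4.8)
The plan is to exploit the fact that $\phi_\chi$ is, by construction, the conjugate of $\phi$ by the $\overline{K}$-isomorphism $\psi_\chi$, so that iterating $\phi_\chi$ is the same as iterating $\phi$ and conjugating once.

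First I would record, directly from the defining commutative square, that $\phi_\chi = \psi_\chi\circ\phi\circ\psi_\chi^{-1}$ as morphisms $V^\chi\to V^\chi$ over $\overline{K}$. Next I would prove by induction on $n\geq 0$ that $\phi_\chi^n = \psi_\chi\circ\phi^n\circ\psi_\chi^{-1}$. The case $n=0$ is the identity on $V^\chi$, and for the inductive step one composes $\phi_\chi^{n+1} = \phi_\chi\circ\phi_\chi^n = (\psi_\chi\circ\phi\circ\psi_\chi^{-1})\circ(\psi_\chi\circ\phi^n\circ\psi_\chi^{-1})$, where the inner $\psi_\chi^{-1}\circ\psi_\chi$ cancels, leaving $\psi_\chi\circ\phi^{n+1}\circ\psi_\chi^{-1}$. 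Equivalently, $\phi_\chi^n\circ\psi_\chi = \psi_\chi\circ\phi^n$ for every $n$.

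Then I would conclude as follows. Since $\psi_\chi$ is a bijection on $\overline{K}$-points and $P=\psi_\chi(Q)$, for any fixed $n\geq 1$ we have $\phi^n(Q)=Q$ iff $\psi_\chi(\phi^n(Q))=\psi_\chi(Q)$ iff $\phi_\chi^n(\psi_\chi(Q))=\psi_\chi(Q)$ iff $\phi_\chi^n(P)=P$. Taking the union over all $n\geq 1$ shows that $Q$ is periodic for $\phi$ if and only if $P$ is periodic for $\phi_\chi$. (The $\phi^\chi$ appearing in the statement is the map this subsection calls $\phi_\chi$.)

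There is no genuine obstacle here; the argument is purely formal, the only point requiring a moment's care being that $\psi_\chi$ is defined only over $\overline{K}$ — but this is harmless, since periodicity is a condition on points of $V(\overline{K})$ and $V^\chi(\overline{K})$, on which $\psi_\chi$ restricts to a bijection. If one wants slightly more, the same computation shows that $\psi_\chi$ carries the exact period of $Q$ to the exact period of $P$, a refinement that will be convenient when comparing periodic point counts across twists.
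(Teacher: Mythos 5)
Your proof is correct and is essentially the paper's argument: both use the conjugation identity $\phi_\chi^n = \psi_\chi\circ\phi^n\circ\psi_\chi^{-1}$ (the paper applies it implicitly in each direction, you make the induction explicit) together with $\psi_\chi$ being a bijection on $\overline K$-points. The final remark about exact periods being preserved matches the paper's parenthetical comment following the proposition.
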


\begin{proof}
This follows from the commutativity of the diagram above. If $Q\in V$ is periodic for $\phi$, then $\phi^n(Q)=Q$ for some $Q$. Let $P=\psi_\chi(Q)$. Then $$\left(\phi_\chi\right)^n(P)=\left(\phi_\chi\right)^n\left(\psi_{\chi}(Q)\right)=\psi_\chi\left(\phi^n(Q)\right)=\psi_\chi(Q)=P,$$
so $P$ is periodic for $\phi_\chi$.

Now, suppose $P$ is periodic for $\phi^{\chi}$, so $\left(\phi_\chi\right)^n(P)=P$ for some $n$. Since $\psi^{\chi}$ is an isomorphism, we have that $\phi\circ \psi_\chi^{-1}=\psi_\chi^{-1}\circ \phi_\chi$. Then, 

$$\phi^n(Q)=\phi^n\left(\psi_{\chi}^{-1}(P)\right)=\psi_\chi^{-1}\left(\phi_{\chi}^n(P)\right)=\psi_\chi^{-1}(P)=Q,$$
so $Q$ is periodic for $\phi$.
\end{proof}

In fact, with a little more care, the argument above shows that a point in $V$ is periodic of period $n$ if and only if the corresponding point in $V^\chi$ is periodic of period $n$. However, the previous lemma is all that is needed to prove the following claim.

\begin{proposition}\label{P:twistcorresp}
Let $G$ be a finite abelian subgroup of $\Aut_{\overline{\F}_q}(V)$ on which $\Gal(\overline{\F}_q/\F_q)$ acts trivially, i.e. $G$ is an abelian subgroup of $\Aut_{\F_q}(V)$. Then every periodic point in $\Per((V/G)(\F_q))$ corresponds to an $\F_q$-periodic point on exactly $|G|$ twists in $\Twist(V, G)$.
\end{proposition}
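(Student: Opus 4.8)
The plan is to combine the point-counting statement of Proposition~\ref{P:Fq Points Correspond} with the dynamical correspondence of Proposition~\ref{P:Per Points Correspond}, using Lemma~\ref{perquotient} to translate between periodicity on $V/G$ and periodicity on $V$. First I would take a periodic point $\widetilde{Q}\in\Per((V/G)(\F_q))$ for $\phi_G$. By Proposition~\ref{P:Fq Points Correspond}, there are exactly $|G|$ twists $V^\chi\in\Twist(V,G)$ on which some preimage of $\widetilde{Q}$ under $\pi$ is an $\F_q$-point; the argument in Proposition~\ref{P:Stabs Correspond} in fact exhibits, for each such twist $V^\chi$, a specific preimage $Q_i$ and shows that $Q_i$ is the point of $V(\bar\F_q)$ corresponding to an $\F_q$-point of $V^\chi$ (via $\psi_\chi$). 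So I would set up a bijection between the $|G|$ relevant twists and the pairs $(Q_i,\text{twist})$ produced there.

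The next step is to check that each of these $\F_q$-points of $V^\chi$ is in fact $\phi_\chi$-periodic, i.e.\ lies in $\Per(V^\chi(\F_q))$. For this: any preimage $Q_i$ of $\widetilde{Q}$ satisfies, by Lemma~\ref{perquotient}, that $Q_i$ is periodic for $\phi$ on $V$ (since $\widetilde{Q}=\pi(Q_i)$ is periodic for $\phi_G$). Then Proposition~\ref{P:Per Points Correspond} applied to the twist $V^\chi$ gives that $\psi_\chi(Q_i)$ is periodic for $\phi_\chi$ on $V^\chi$. Combined with the fact that $\psi_\chi(Q_i)\in V^\chi(\F_q)$, this places $\psi_\chi(Q_i)\in\Per(V^\chi(\F_q))$. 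Hence $\widetilde{Q}$ "corresponds to" an $\F_q$-periodic point on at least these $|G|$ twists, and by Proposition~\ref{P:Fq Points Correspond} on no more than $|G|$ twists (since any twist on which a preimage of $\widetilde{Q}$ gives an $\F_q$-point is already among the $|G|$). Conversely, I should note that if $\psi_\chi(Q_i)\in\Per(V^\chi(\F_q))$ with $\pi(Q_i)=\widetilde{Q}$, then running Propositions~\ref{P:Per Points Correspond} and Lemma~\ref{perquotient} backwards forces $\widetilde{Q}\in\Per((V/G)(\F_q))$, so the correspondence is consistent in both directions.

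One point requiring care: the phrase "corresponds to an $\F_q$-periodic point on exactly $|G|$ twists" must be interpreted as counting twists-with-multiplicity-of-preimage exactly as in Proposition~\ref{P:Fq Points Correspond}, where a given twist may be hit by more than one preimage and the count $|G|=m\cdot|G_Q|$ already accounts for this. I would therefore phrase the statement and proof so that the $|G|$ being counted is the same quantity $\sum_i \#\{\text{twists on which }Q_i\text{ is an }\F_q\text{-point}\}$ that Proposition~\ref{P:Fq Points Correspond} evaluates, and simply observe that the "$\F_q$-point" in each summand upgrades to "$\F_q$-periodic point" precisely when $\widetilde{Q}$ is periodic, by the two propositions above.

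The main obstacle is bookkeeping rather than a genuine mathematical difficulty: making sure that "corresponds to" is given the same meaning here as in Proposition~\ref{P:Fq Points Correspond}, so that the factor $|G|$ transports verbatim, and confirming that $\phi_\chi$-periodicity of $\psi_\chi(Q_i)$ together with $\psi_\chi(Q_i)\in V^\chi(\F_q)$ genuinely means membership in $\Per(V^\chi(\F_q))$ — which it does, since an $\F_q$-rational point that is periodic over $\bar\F_q$ is in particular an $\F_q$-periodic point (its forward orbit is $\Gal$-stable). Once that is pinned down, the proof is just the concatenation of Proposition~\ref{P:Fq Points Correspond}, Lemma~\ref{perquotient}, and Proposition~\ref{P:Per Points Correspond}.
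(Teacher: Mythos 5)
Your proposal is correct and follows essentially the same route as the paper: invoke Proposition~\ref{P:Fq Points Correspond} to get the count of $|G|$ twists, and upgrade ``$\F_q$-point'' to ``$\F_q$-periodic point'' via Proposition~\ref{P:Per Points Correspond}. The paper's proof is a two-line citation of those same two propositions; you additionally (and correctly) make explicit the role of Lemma~\ref{perquotient} in passing periodicity from $\widetilde{Q}$ to its preimages on $V$, and flag the multiplicity convention in ``corresponds to,'' both of which the paper leaves implicit.
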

\begin{proof}
By Proposition \ref{P:Fq Points Correspond}, every point in $\Per((V/G)(\F_q))$ corresponds to $|G|$ $\F_q$-points on the various twists $V^\chi$. The result is a direct consequence of this fact and Proposition \ref{P:Per Points Correspond}.
\end{proof}

We will now prove the first main result of the paper.

\begin{proof}[Proof of Theorem \ref{T:firstmain}]
This follows from Proposition \ref{P:twistcorresp} by counting points.
\end{proof}
\section{Idempotent relations of periodic points}\label{S:idempotent}
\subsection{Background on idempotent relations}
Let $G$ be a finite group. To each subgroup $H\leq G$ we associate the idempotent 
$$\epsh=|H|^{-1}\sum_{h\in H} h \in \mathbb{Q}[G].$$ 

\begin{definition} A relation of the form $$\sum_{H\subset G}n_H \epsh=0,$$ with $n_H\in \mathbb{Z}$, is called an 
\emph{idempotent relation}.

Given $\alpha\in \Q[G]$, we 
write $\alpha\sim0$ if $\psi(\alpha)=0$ for every rational 
character $\psi$ of $G$.
\end{definition}

\begin{remark}\label{R:partitionrelation}  Every non-cyclic group $G$ admits a nontrivial idempotent relation. A special class of idempotent relations are induced by \emph{partitions} of a group $G$, by which we mean decompositions $G=\bigcup_{i=1}^k H_i$, where $H_1,\ldots,H_k$ are subgroups of $G$ such that $H_i\cap H_j=\{\operatorname{id}\}$ for $i\neq j$. From such a partition we get the idempotent relation:

$$|G|\varepsilon_G +(k-1)\varepsilon_{\operatorname{id}}=\sum_{i=1}^k |H_i|\varepsilon_{H_i}.$$
\end{remark}

Kani and Rosen prove the following theorem:

\begin{theorem}[Kani-Rosen \cite{Kani-Rosen1994}*{Proposition 3.1}]\label{T:point count relation}
Let $V$ be a quasiprojective variety over $\mathbb{F}_q$. Suppose $G$ is a finite subgroup of $\Aut_{\F_q}(V)$. Then,

$$\sum_{H\subset G} n_H\epsh\sim0 \implies \sum_{H\subset G} n_H |(V/H)(\F_{q})|=0.$$
\end{theorem}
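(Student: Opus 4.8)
The plan is to express each count $|(V/H)(\F_{q})|$ as the value at the idempotent $\epsh$ of a single fixed $\Q$-linear functional on $\Q[G]$, and then to show that this functional is annihilated by every $\alpha=\sum_{H}n_H\epsh$ with $\alpha\sim0$. The first part is pure combinatorics of the Frobenius action on $V(\Fbar_q)$; the second is a small observation about the idempotents $\epsh$ and rational characters.

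First I would record an orbit-counting identity, generalizing to arbitrary subgroups the computation preceding the proof of Theorem~\ref{T:firstmain}. Let $F$ be the $q$-power Frobenius acting on $X:=V(\Fbar_q)$; since $G\subseteq\Aut_{\F_q}(V)$, $F$ commutes with every element of $G$. As $V$ is quasiprojective, $V/H$ is a geometric quotient defined over $\F_q$ with $(V/H)(\Fbar_q)=X/H$ and $F$ acting on $H$-orbits, so $(V/H)(\F_q)$ is exactly the set of $H$-orbits $O\subseteq X$ with $F(O)=O$. For $h\in G$ put $\Fix(hF)=\{P\in X:\ h(F(P))=P\}$; because $(hF)^{|h|}=F^{|h|}$ this set lies in $V(\F_{q^{|h|}})$ and is finite. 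Counting $\{(P,h)\in X\times H:\ h(F(P))=P\}$ first by $h$, and then by the $H$-orbit of $P$ (where a short orbit--stabilizer argument shows every $F$-stable orbit contributes exactly $|H|$, since for fixed $P$ the solutions $h$ form a coset of $\Stab_H(F(P))$), gives
$$|(V/H)(\F_q)|=\frac{1}{|H|}\sum_{h\in H}|\Fix(hF)|.$$

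Next I would set $N\colon G\to\Z$, $N(h)=|\Fix(hF)|$. Since $F$ commutes with $G$, $\Fix(ghg^{-1}F)=\Fix(ghFg^{-1})=g\cdot\Fix(hF)$, so $N$ is a class function; hence $N=\sum_{\rho}c_\rho\chi_\rho$ for some $c_\rho\in\C$, the sum over the irreducible complex characters $\rho$ of $G$. Extending $N$ $\Q$-linearly to $\widetilde N\colon\Q[G]\to\C$, the displayed identity reads $\widetilde N(\epsh)=|(V/H)(\F_q)|$, while $\widetilde N=\sum_\rho c_\rho\chi_\rho$ as functionals on $\Q[G]$. Therefore, with $\alpha=\sum_H n_H\epsh$,
$$\sum_{H\subseteq G}n_H\,|(V/H)(\F_q)|=\widetilde N(\alpha)=\sum_\rho c_\rho\,\chi_\rho(\alpha),$$
and it suffices to prove $\chi_\rho(\alpha)=0$ for \emph{every} irreducible complex $\rho$, not only the rational ones appearing in the hypothesis $\alpha\sim0$. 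This is where the special form of $\alpha$ enters: $\chi_\rho(\epsh)=|H|^{-1}\sum_{h\in H}\chi_\rho(h)=\dim\rho^{H}\in\Z_{\geq0}$, so for $\sigma\in\Gal(\Qbar/\Q)$ the Galois conjugate satisfies $\dim(\rho^{\sigma})^{H}=\sigma(\dim\rho^{H})=\dim\rho^{H}$, whence $\chi_{\rho^{\sigma}}(\alpha)=\chi_\rho(\alpha)$. A rational irreducible character equals $m\sum_{\sigma}\chi_{\rho^{\sigma}}$ over a Galois orbit, so its value at $\alpha$ is $m\cdot(\text{orbit size})\cdot\chi_\rho(\alpha)$; thus $\alpha\sim0$ forces $\chi_\rho(\alpha)=0$ for all complex $\rho$, and the sum above vanishes.

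The main obstacle is precisely this last step, the upgrade from "vanishing on all rational characters" to "vanishing on all complex characters": it is false for a general element of $\Q[G]$ (for instance $g-g^{2}\in\Q[\Z/3]$ is $\sim0$ yet is not killed by the nontrivial characters), and it holds here only because $\alpha$ is built from the idempotents $\epsh$. Beyond that, the points needing care are the standing hypotheses that legitimize the orbit count — that $V/H$ is a geometric quotient over $\F_q$ with the expected $\Fbar_q$-points and compatible Frobenius action (guaranteed by quasiprojectivity), and that each $\Fix(hF)$ is finite (guaranteed by finite type). An alternative to the combinatorial Steps~1--2 is the Grothendieck--Lefschetz trace formula together with $H^{i}_{c}(V/H,\Q_\ell)\cong H^{i}_{c}(V,\Q_\ell)^{H}=\epsh\,H^{i}_{c}(V,\Q_\ell)$, giving $\sum_H n_H|(V/H)(\F_q)|=\sum_i(-1)^i\tr\!\bigl(F\alpha\mid H^{i}_{c}(V,\Q_\ell)\bigr)$; decomposing each $H^{i}_{c}$ into $G$-isotypic components (with their commuting $F$-action) again reduces the problem to the character vanishing $\chi_\rho(\alpha)=0$.
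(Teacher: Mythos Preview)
Your argument is correct. The core Burnside identity $|(V/H)(\F_q)|=\frac{1}{|H|}\sum_{h\in H}|\Fix(hF)|$ is exactly what the paper establishes inside Lemma~\ref{L:properties}(d) (with $\phi=\operatorname{id}$, so that $\PerFix=\Fix$), so on that point the two arguments agree. Where they diverge is in the character-theoretic reduction. The paper converts the hypothesis $\sum n_H\epsh\sim0$ into the genuine equality $\sum n_H\psi_H=0$ of induced characters via Frobenius reciprocity (Lemma~\ref{L:character relations}), packages the fixed-point counts into the coefficients $\nu_n(\psi)$ of an $L$-function, and then simply uses linearity of $\nu_n$ in $\psi$ together with $\nu_n(0)=0$. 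You instead stay in $\Q[G]$, view the count as the value of a single class-function functional $\widetilde N$ at $\epsh$, and handle the passage from rational to complex characters by the observation that $\chi_\rho(\epsh)=\dim\rho^H\in\Z$ is Galois-stable, so $\chi_\rho$ and $\chi_{\rho^\sigma}$ agree on $\alpha$. These are two faces of the same coin (indeed $\chi_\rho(\epsh)=(\chi_\rho,\psi_H)_G$ by \eqref{E:frob recip}), but the packaging is genuinely different: your route is more elementary and self-contained for this single statement, while the paper's $L$-function framework is built to treat all $\F_{q^n}$ at once and to carry over verbatim to the periodic-point generalization of Theorem~\ref{T:secondmain}, which is the paper's real goal. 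Your alternative sketch via $\ell$-adic cohomology and Grothendieck--Lefschetz is also valid and closer in spirit to how Kani and Rosen themselves argue.
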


We prove that the same is true of the periodic point counts. In \cites{Kani-Rosen1994}, Kani and Rosen prove the following result.

\begin{lemma}[Kani-Rosen \cite{Kani-Rosen1994}*{Proposition 1.1}]\label{L:character relations} Let $G$ be a finite subgroup of $\Aut_{\F_q}(V)$, and let $H$ be a subgroup of $G$. Let $\psi_H=\IndH$ be the character on $G$ induced by the trivial character on $H$. Then,
$$\sum_{h\in H} n_H \psi_H=0 \iff \sum_{h\in H} n_H \epsh \sim0.$$
\end{lemma}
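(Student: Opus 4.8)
The plan is to compute the value $\psi(\epsh)$ of an arbitrary rational character $\psi$ of $G$ on each idempotent $\epsh$, to recognize it as the inner product $\langle\psi,\psi_H\rangle_G$ of $\psi$ with the induced character $\psi_H=\IndH$, and then to deduce the equivalence from the non-degeneracy of the usual inner product of class functions on the $\Q$-span of the rational characters of $G$.

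For the computation, fix a rational representation $\rho\colon G\to\GL(W)$ with character $\psi$ and extend $\rho$ linearly to an algebra homomorphism $\Q[G]\to\End_\Q(W)$, so that $\psi(\alpha)=\Tr\rho(\alpha)$ for $\alpha\in\Q[G]$. Since $\epsh^2=\epsh$ in $\Q[G]$, the operator $\rho(\epsh)=|H|^{-1}\sum_{h\in H}\rho(h)$ is idempotent, and it is the usual averaging projector of $W$ onto the subspace $W^H$ of $H$-fixed vectors; hence $\psi(\epsh)=\Tr\rho(\epsh)=\dim_\Q W^H$. By Frobenius reciprocity over $\Q$,
$$\dim_\Q W^H=\langle\Res_H^G\psi,\,1_H\rangle_H=\langle\psi,\,\IndH\rangle_G=\langle\psi,\,\psi_H\rangle_G.$$
Putting $\alpha:=\sum_{H\le G}n_H\epsh$ and $\beta:=\sum_{H\le G}n_H\psi_H$ and taking the $n_H$-weighted sum of this identity gives $\psi(\alpha)=\langle\psi,\beta\rangle_G$ for every rational character $\psi$ of $G$.

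Both implications now follow. If $\sum_{H\le G} n_H\psi_H=\beta=0$, then $\psi(\alpha)=\langle\psi,0\rangle_G=0$ for every rational character $\psi$, i.e.\ $\alpha\sim0$. Conversely, if $\alpha\sim0$ then $\langle\psi,\beta\rangle_G=0$ for every rational character $\psi$. Each $\psi_H$ is the character of the permutation representation $\Q[G/H]$ (indeed $\Q[G/H]\cong\Q[G]\epsh$ as left $\Q[G]$-modules), so $\beta$ lies in the $\Q$-span $\Lambda$ of the rational characters of $G$. The usual inner product on class functions restricts to a positive definite, in particular non-degenerate, form on the real vector space of real-valued class functions, hence also on $\Lambda$; since $\langle\psi,\beta\rangle_G=0$ for all $\psi$ in a spanning set of $\Lambda$ and $\beta\in\Lambda$, we get $\langle\beta,\beta\rangle_G=0$ and therefore $\beta=0$. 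This proves the stated equivalence.

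The argument is largely formal, and the steps that deserve the most care are the identification of $\rho(\epsh)$ with the projector onto $W^H$ — so that its trace is exactly $\dim_\Q W^H$ — and, in the converse direction, the verification that $\beta$ really lies in the span of rational characters, which holds because it is an integer combination of the permutation characters $\psi_H$. One should also keep in mind that ``$\psi(\alpha)$'' in the definition of $\sim$ refers to the value of the linearly extended representation on the group-algebra element $\alpha$, rather than to any pointwise product of class-function values.
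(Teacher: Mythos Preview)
Your proof is correct and follows essentially the same route as the paper's: both establish the key identity $\psi(\epsh)=\langle\psi,\psi_H\rangle_G$ via Frobenius reciprocity (you insert the extra step $\psi(\epsh)=\dim_\Q W^H$, the paper writes the chain $\psi(\epsh)=|H|^{-1}\sum_{h\in H}\psi(h)=(\psi|_H,1_H)_H=(\psi,\psi_H)_G$ directly), and for the converse both take $\psi=\beta=\sum n_H\psi_H$ to obtain $\langle\beta,\beta\rangle_G=0$. Your framing through non-degeneracy on the span $\Lambda$ of rational characters is a mild repackaging of the same step.
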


For the convenience of the reader, we reproduce the proof here.

\begin{proof}
Given characters $\alpha$ and $\beta$ on $G$, recall the pairing:
$$( \alpha ,\beta )_G :={\frac {1}{|G|}}\sum _{g\in G}\alpha (g){\overline {\beta (g)}}.$$

Using Frobenius reciprocity, we see that:

\begin{equation}
\label{E:frob recip}
\psi(\epsh)=|H|^{-1}\sum_{h\in H} \psi(h)=(\psi|_H,1_H)_H=(\psi,\psi_H)_G.
\end{equation}

First, suppose that $\sum n_H \psi_H=0$. Then, using \eqref{E:frob recip}, for any rational character $\psi$ we have that
$$\psi\left(\sum_{h\in H} n_H \epsh\right)=\sum_{h\in H} n_H \psi(\epsh)= \sum_{h\in H} n_H (\psi,\psi_H)_G=  \left(\psi,\sum_{h\in H} n_H\psi_H\right)_G=0,$$
and thus $\sum n_H \epsh \sim0$.

Now, suppose that $\sum n_H \epsh \sim0$. For any rational character $\psi$, $\psi(\sum n_H \epsh)=0$. In particular, this holds for $\psi=\sum n_H \psi_H$. Setting $\psi=\sum n_H\psi_H$, we find using \eqref{E:frob recip} that
$$0=\psi\left(\sum_{h\in H} n_H \epsh\right)=\left(\psi,\sum_{h\in H} n_H\psi_H\right)_G=\left(\sum_{h\in H} n_H\psi_H,\sum_{h\in H} n_H\psi_H\right)_G,$$ so we conclude that $\sum n_H\psi_H=0$.
\end{proof}

\subsection{Idempotent relations and periodic points}

In \cite{Kani-Rosen1994}, Kani and Rosen prove their results using the $\zeta$- and $L$-functions described in \cite{Serre1965}. Generalizing their work, we introduce periodic analogues of these $\zeta$- and  $L$-functions and detail some of their properties.

\begin{definition}The \emph{periodic zeta function} for $\phi$ is the function:

$$\zeta_\phi(V,s):=\exp\left(\sum_{n=1}^\infty |\Per_\phi(V)(\F_{q^n})|\frac{q^{-ns}}{n}\right).$$
\end{definition}
We write $\zeta_\phi(V/G,s)$ to denote the function $\zeta_{\phi_G}(V/G,s)$. It is convenient to make the substitution $u=q^{-s}$ and define 
$$Z_\phi(V,u):=\exp\left(\sum_{n=1}^\infty |\Per_\phi(V)(\F_{q^n})|\frac{u^n}{n}\right);$$
 as before we write $Z_\phi(V/G, u)$ to denote $Z_{\phi_G}(V/G, u).$

Let $\sigma_q:V\rightarrow V$ be the Frobenius morphism of $V$ onto itself. The set of fixed points of the action of $\sigma_q^n$ on $V(\bar{\F}_q)$ is $V(\F_{q^n})$. 

\begin{definition}
Let $\psi$ be a character of $G$. Let $|\PerFix(g\sigma_q^n)|$ count the number of fixed points of $g\sigma_q^n$ acting on $\Per(V)(\bar{\F}_q)$, and define $$\nu_n(\psi)=\frac{1}{|G|}\sum_{g\in G} \psi(g^{-1})|\PerFix(g\sigma_q^n)|.$$
The \emph{periodic $L$ function for $\phi$} is the function $L_\phi(V,\psi,s)$ defined by
$$ L_\phi(V,\psi,s)=\exp\left(\sum_{n=1}^\infty \nu_n(\psi)\frac{q^{-ns}}{n}\right).$$\end{definition}

We will need the following properties of the periodic $L$ function.

\begin{lemma}\label{L:properties}
$L_\phi(V,\psi,s)$ has the following properties.
\begin{enumerate}[(a)]

\item $L_\phi(V,\psi,s)$ is analytic in $\re(s)>\dim(V)$.
\item $L_\phi(V,\psi+\psi',s)=L_\phi(V,\psi,s)L_\phi(V,\psi',s)$.
\item $L_\phi(V,0,s)=1$.
\item If $\psi_H=\Ind_H^{G}(1_H)$, then
$$L_\phi(V,\psi_H,s)=\zeta_\phi(V/H, s).$$
\end{enumerate}
\end{lemma}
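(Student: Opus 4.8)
The plan is to establish the four properties essentially in reverse order of difficulty, leaning on standard facts about Artin $L$-functions in the style of Serre \cite{Serre1965}, but applied to the Frobenius action restricted to the set of periodic points rather than to all of $V$. The key observation underpinning everything is that $\Per_\phi(V)(\bar\F_q)$ is a $\Gal(\bar\F_q/\F_q)$-stable and $G$-stable subset of $V(\bar\F_q)$ — Galois-stable because $\phi$ is defined over $\F_q$, so $\sigma_q$ permutes periodic points, and $G$-stable because $\phi$ commutes with $G$. Moreover it is a \emph{finite} set, since every point of $V(\bar\F_q)$ lies in some $V(\F_{q^m})$ and is therefore preperiodic, so only finitely many $\F_{q^m}$ are needed before the periodic points stabilize; in fact $|\Per_\phi(V)(\F_{q^n})|$ is eventually constant in $n$. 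This finiteness is what makes the $L$-function manipulations purely formal.

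Properties (b) and (c) are immediate from the definition: $\nu_n(\psi)$ is $\Z$-linear (indeed $\C$-linear) in $\psi$ because $\psi \mapsto \psi(g^{-1})$ is, and $\nu_n(0)=0$; since $\exp$ converts the resulting sums into products, (b) and (c) follow by inspecting the defining series. For (a), I would argue that each $\nu_n(\psi)$ is bounded: $|\PerFix(g\sigma_q^n)| \le |\Per_\phi(V)(\F_{q^n})|$ (every fixed point of $g\sigma_q^n$ on the periodic set is in particular periodic, and — using the commutation of $\phi$ with $g$ and finiteness of the order of $g$, exactly as in the proof of Lemma \ref{perquotient} — is defined over $\F_{q^{nm}}$, hence the counts are uniformly bounded by the stabilized value $N := \lim_n |\Per_\phi(V)(\F_{q^n})|$). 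Thus $|\nu_n(\psi)| \le \|\psi\|_\infty N$, and the series $\sum_n \nu_n(\psi) u^n/n$ converges for $|u| < 1$, i.e. for $\re(s) > 0$; since $N \le |V(\F_{q^n})|$ which grows like $q^{n\dim V}$, one even gets analyticity in the claimed half-plane $\re(s) > \dim V$ (in fact better, but the stated bound suffices). The point is simply that the exponential of a series analytic on a half-plane is analytic there.

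The substantive step is (d), and I expect it to be the main obstacle. Here I would use the Lefschetz-type / Burnside counting identity: for $\psi_H = \Ind_H^G(1_H)$ one has $\psi_H(g) = |\{xH \in G/H : gxH = xH\}| = |H|^{-1}\,|\{x \in G : x^{-1}gx \in H\}|$. Plugging this into $\nu_n(\psi_H) = |G|^{-1}\sum_{g \in G}\psi_H(g^{-1})|\PerFix(g\sigma_q^n)|$ and interchanging the order of summation, one wants to reorganize the count of pairs $(g, P)$ with $g\sigma_q^n(P) = P$, weighted by $\psi_H(g^{-1})$, into a count of $\F_{q^n}$-rational periodic points of $\phi_H$ on $V/H$. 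Concretely, a point $\widetilde P \in (V/H)(\F_{q^n})$ that is periodic for $\phi_H$ corresponds (via Lemma \ref{perquotient} applied to $H$, plus the $\F_{q^n}$-rationality analysis of Proposition \ref{P:Stabs Correspond} with $H$ in place of the ambient group) to an $H$-orbit of periodic points $P$ of $\phi$ on which $\sigma_q^n$ acts through some element of $H$; counting the preimages and the stabilizers carefully should show that the weighted sum over $g \in G$ of $|\PerFix(g\sigma_q^n)|$ collapses exactly to $|G|\cdot|\Per_{\phi_H}(V/H)(\F_{q^n})|$, giving $\nu_n(\psi_H) = |\Per_{\phi_H}(V/H)(\F_{q^n})|$ and hence $L_\phi(V,\psi_H,s) = \zeta_{\phi_H}(V/H,s) = \zeta_\phi(V/H,s)$ upon exponentiating. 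The bookkeeping — tracking how a single $\F_{q^n}$-point downstairs splits into $\sigma_q^n$-orbits upstairs of varying sizes, and how conjugacy in $G$ interacts with the non-normality of $H$ — is the delicate part; this is precisely the periodic-point refinement of Serre's computation that $L(V,\Ind_H^G 1_H, s) = \zeta(V/H, s)$, so I would model the argument on \cite{Serre1965} and on the proof of Theorem \ref{T:point count relation} in \cite{Kani-Rosen1994}, restricting every point count to the periodic locus throughout.
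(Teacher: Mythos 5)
Your plan matches the paper's route: (b) and (c) are exactly the linearity observations the paper uses, and for (d) the paper carries out precisely the reduction you sketch — expand $\nu_n(\psi_H)$ via the induced-character formula, use that $|\PerFix(g\sigma_q^n)|$ is a class function of $g$ (since $G\subset\Aut_{\F_q}(V)$ commutes with $\sigma_q$) to collapse the sum to $\frac{1}{|H|}\sum_{h\in H}|\PerFix(h\sigma_q^n)|$, and then do the preimage/stabilizer count (via Lemma~\ref{perquotient} and Lemma~\ref{L:nonabpreimages}) to identify this with $|\Per(V/H)(\F_{q^n})|$. So on (b), (c), (d) you are on target.

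The one genuine error is in your justification of (a). You assert that $\Per_\phi(V)(\bar\F_q)$ is finite and that $|\Per_\phi(V)(\F_{q^n})|$ is eventually constant in $n$. This is false: take $\phi=\operatorname{id}_V$, so every point is periodic and $|\Per_\phi(V)(\F_{q^n})| = |V(\F_{q^n})|$ grows like $q^{n\dim V}$; or take $\phi$ to be the $q$-power Frobenius on $\Aff^1$, where $\Per_\phi(\Aff^1)(\bar\F_q)=\bar\F_q$. Preperiodicity of every $\bar\F_q$-point does not force the periodic locus to be finite, and the bound $|\nu_n(\psi)|\le\|\psi\|_\infty N$ with $N$ constant is unavailable. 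The correct source of the estimate is the one the paper gestures at by citing Serre: $\PerFix(g\sigma_q^n)$ is contained in the set of all fixed points of $g\sigma_q^n$ on $V(\bar\F_q)$, which is the set of $\F_{q^n}$-points of the twist of $V$ by the cocycle $\sigma_q^n\mapsto g^{-1}$, and hence is $O(q^{n\dim V})$ by Lang--Weil; this gives convergence in $\re(s)>\dim V$ directly, with no finiteness of the periodic locus needed (and indeed none of the manipulations in (b)--(d) need it either — they are identities of formal power series).
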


\begin{proof}
\begin{enumerate}[(a)]
\item This follows from \cites{Serre1965}.
\item Clear since $\nu_n(\psi+\psi')=\nu_n(\psi)+\nu_n(\psi')$.
\item Clear since $\nu_n(0)=0$.
\item Notice that
\begin{align*}\nu_n(\psi_H)&=\frac{1}{|G|}\sum_{g\in G} \psi_H(g^{-1})|\PerFix(g\sigma_q^n)|\\
&=\frac{1}{|G|}\sum_{h\in H} \frac{|G|}{|H|}|\PerFix(h\sigma_q^n)|\\
&=\frac{1}{|H|}\sum_{h\in H} |\PerFix(h\sigma_q^n)|.\end{align*} 
It remains to show that 
$$\frac{1}{|H|}\sum_{h\in H} |\PerFix(h\sigma_q^n)|=|\Per(V/H)(\mathbb{F}_{q^n})|,$$
whence the claim follows.

First, suppose that $Q\in \PerFix(h\sigma_q^n)$. Let $\widetilde{Q}$ be the image of $Q$ in the quotient $V/H$. By Lemma \ref{perquotient},  $\widetilde{Q}$ is a periodic point for $\phi_H$. Since $Q$ is fixed under $h\sigma_q^n$, it follows that $Q^{\sigma_q^n}=h^{-1}(Q),$ so $\widetilde{Q}^{\sigma_q^n}=\widetilde{Q}$. Thus, any element $Q\in \PerFix(h\sigma_q^n)$ is the preimage of a point in $\Per(V/H)(\mathbb{F}_{q^n})$

Given $\widetilde{Q}\in\Per(V/H)(\mathbb{F}_{q^n})$, let $\pi^{-1}(\widetilde{Q})=\{Q_1,\ldots, Q_m\}$ be the set of preimages of $\widetilde{Q}$ in $V$. By Lemma \ref{perquotient}, all of these preimages are periodic points.

Since $\widetilde{Q}^{\sigma_q^n}=\widetilde{Q}$, the action of $\sigma_q^n$ on $\pi^{-1}(\widetilde{Q})$ permutes the preimages. Thus, there exists $h_q\in H$ so that $Q_1^{\sigma_q^n}=h_q(Q_1)$. It follows that 
$$Q_1=h_q^{-1} \left(Q_1^{\sigma_q^n}\right)=h_q^{-1}\circ\sigma_q^n\left(Q_1\right),$$ therefore $Q_1\in \PerFix(h_q^{-1}\sigma_q^n)$. By Lemma \ref{L:nonabpreimages}, the stabilizer subgroups of $h$ of the preimages of $\widetilde{Q}$ are all conjugate and therefore all have the same order. Let $|H_Q|$ denote the order of the stabilizer subgroups of the preimages, and let $H_{Q_1}$ be the stabilizer subgroup of $Q_1$.

We see that $Q_1\in \PerFix(h^{-1}\sigma_q^n)$ provided the element $h\in H$ has the property that $Q_1^{\sigma_q^n}=h(Q_1)$. We found that $h_q$ has this property and wish to count how many such elements there are. Given an element in the stabilizer subgroup $s\in H_{Q_1}$, the element $sh_q\in G$ also has the property that $Q_1^{\sigma_q}=sh_q(Q_1)$. Moreover, if $k_q\in H$ has the property that $Q_1^{\sigma_q}=k_q(Q_1)$, then $h_q^{-1}k_q(Q_1)=Q_1$, so $k_q$ differs from $k_q$ by an element of the stabilizer $H_{Q_1}$. Thus, we conclude that there are exactly $|H_{Q_1}|=|H_{Q}|$ choices for $h_q$, and therefore there are exactly $|H_{Q}|$ sets of the form $\PerFix(h\sigma_q^n)$ that contain $Q_1$. The same is true for every other preimage $Q_i$ of $\widetilde{Q}$. There are exactly $|H|/|H_Q|$ preimages, so $\widetilde{Q}$ corresponds to $|H|$ points in the various sets of the form $\PerFix(h\sigma_q^n)$ (counted with multiplicity).

\end{enumerate}
\end{proof}

These properties will help prove the following result.

\begin{proposition} \label{P:sumtoproduct}
With notation as before, $$\sum_{H\subset G} n_H\epsh \sim 0\Longrightarrow\prod_{H\subset G} \zeta_\phi(V/H, s)^{n_H}=1.$$
\end{proposition}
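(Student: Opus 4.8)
The plan is to translate the idempotent relation into an additive relation among induced characters via Lemma~\ref{L:character relations}, and then exponentiate using the multiplicativity of the periodic $L$-function. First I would invoke Lemma~\ref{L:character relations}: the hypothesis $\sum_{H\subset G} n_H\epsh\sim 0$ is equivalent to the identity $\sum_{H\subset G} n_H\psi_H=0$ of (virtual) characters of $G$, where $\psi_H=\Ind_H^G(1_H)$. This is the crucial bridge, since it converts a statement about idempotents in $\Q[G]$ into a statement about characters, which is what the periodic $L$-function is built to see.

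Next I would apply the properties of $L_\phi(V,\psi,s)$ collected in Lemma~\ref{L:properties}. Properties (b) and (c) say that $\psi\mapsto L_\phi(V,\psi,s)$ is ``exponential'': it sends sums of characters to products and sends the zero character to $1$. Concretely, from $\sum_{H} n_H\psi_H = 0$ we get
$$\prod_{H\subset G} L_\phi(V,\psi_H,s)^{n_H} = L_\phi\Bigl(V,\ \sum_{H\subset G} n_H\psi_H,\ s\Bigr) = L_\phi(V,0,s) = 1,$$
where for negative $n_H$ one reads $L_\phi(V,\psi_H,s)^{n_H}$ as the reciprocal (this is legitimate because each $L_\phi(V,\psi_H,s)=\zeta_\phi(V/H,s)$ is, by Lemma~\ref{L:properties}(a) together with its product expansion, a nonvanishing analytic function on $\re(s)>\dim V$, hence a unit in the relevant ring of functions). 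Then I would substitute Lemma~\ref{L:properties}(d), namely $L_\phi(V,\psi_H,s)=\zeta_\phi(V/H,s)$, into the displayed product to obtain exactly $\prod_{H\subset G}\zeta_\phi(V/H,s)^{n_H}=1$.

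One small bookkeeping point I would address: the identity $\sum_H n_H\psi_H=0$ is an equality of class functions with integer coefficients, possibly with cancellation between positive and negative parts, so to apply property (b) cleanly I would either first rewrite it as $\sum_{n_H>0} n_H\psi_H = \sum_{n_H<0}(-n_H)\psi_H$ (an equality of genuine characters) and apply (b) to each side separately, yielding $\prod_{n_H>0}\zeta_\phi(V/H,s)^{n_H}=\prod_{n_H<0}\zeta_\phi(V/H,s)^{-n_H}$, and then divide; or simply note that $L_\phi(V,-\psi,s)=L_\phi(V,\psi,s)^{-1}$ follows formally from (b) and (c) since $\nu_n$ is additive. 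Either way the manipulation is routine.

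The main obstacle, such as it is, is not conceptual but lies in making sure the division/inversion step is justified — i.e.\ that we are allowed to raise these $\zeta$-functions to negative integer powers. This is handled by property (a): $\zeta_\phi(V/H,s)=L_\phi(V,\psi_H,s)$ is analytic and, because it arises as $\exp$ of a power series in $u=q^{-s}$ with constant term $0$ (equivalently, its $u$-expansion has constant term $1$), it is invertible as a formal power series in $u$, and nonvanishing on the half-plane $\re(s)>\dim V$. So all the expressions $\zeta_\phi(V/H,s)^{n_H}$ make sense and the product identity is an honest equality of functions (and of formal power series in $u$). Everything else is a direct substitution chain: Lemma~\ref{L:character relations}, then Lemma~\ref{L:properties}(b),(c),(d).
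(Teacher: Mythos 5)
Your proof is correct and follows essentially the same path as the paper's: convert the idempotent relation to $\sum_H n_H\psi_H=0$ via Lemma~\ref{L:character relations}, then apply properties (b), (c), (d) of Lemma~\ref{L:properties} to turn this additive character identity into the multiplicative zeta identity. The extra care you take with negative exponents and invertibility is a reasonable refinement but not a departure from the paper's argument.
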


\begin{proof}
By Lemma \ref{L:character relations}, $\sum_{H\subset G} n_H\epsh \sim 0$ if and only if $\sum_{H\subset G} n_H\psi_H= 0.$ Using properties (b), (c), and (d) from Lemma \ref{L:properties}, we see that:
\begin{align*}1&=L_\phi(V,0,s)\\&=L_\phi\left(V,\sum_{H\subset G} n_H\psi_H, s\right)\\
&=\prod_{H\subset G} L_\phi(V,\psi_H, s)^{n_H}\\
&=  \prod_{H\subset G}\zeta_\phi(V/H, s)^{n_H}.\end{align*}
\end{proof}

Using the previous result, we prove Theorem \ref{T:secondmain}, which says that idempotent relations imply relations on periodic point counts. 

\begin{proof}[Proof of Theorem \ref{T:secondmain}]
By Proposition \ref{P:sumtoproduct}, $\sum_{H\subset G} n_H\epsh \sim 0$ implies that $\prod_{H\subset G} \zeta_\phi(V/H, s)^{n_H}=1$. Making the substitution $u=q^{-s}$, we see that 
$\prod_{H\subset G} Z_\phi(V/H, u)^{n_H}=1$. Taking the logarithm of both sides and equating coefficients of $u^n$, the result follows.
\end{proof}
\begin{remark}
By letting $\phi$ be the identity morphism on $V$, the result of Theorem \ref{T:secondmain} is exactly the result from Kani-Rosen \cite{Kani-Rosen1994}*{Proposition 3.1} cited in Theorem \ref{T:point count relation}.
\end{remark}

\section{Example}\label{S:example}

\subsection{Idempotent relations for a rational map of projective space.} In this section, we give an example to illustrate the periodic point count relation induced by an idempotent relation on projective space of dimension one over certain small finite fields. We also show that the idempotent relation does not generally induce a periodic point count relation over number fields by showing that the induced relation on periodic points does not hold for the same example with base field $K=\Q$.

Let $V:=\mathbb{P}^1_x.$ The maps $$\sigma:\mathbb{P}^1_x\rightarrow\mathbb{P}^1_x \text{ and } \tau:\mathbb{P}^1_x\rightarrow\mathbb{P}^1_x$$ given by $$\sigma(x)=-x\text{ and }\tau(x)=\frac{1}{x}$$
are commuting automorphisms of $\mathbb{P}^1_x\rightarrow\mathbb{P}^1_x$. Let $G:=\left<\sigma,\tau\right>$ be the group generated by these automorphisms under composition. Then $G$ is an abelian group, and $G\cong (\Z/2\Z)^2$. The set of subgroups of $G$ is given by:
$$\left\{G, H_\sigma:=\left<\sigma\right>,H_\tau:=\left<\tau\right> , H_{\sigma\tau}:=\left<\sigma\tau\right>, H_{\operatorname{id}}:=\left<\operatorname{id}\right> \right\},$$
so we have the diagram of subgroup containments below.

\begin{center}
\begin{tikzpicture}[scale=.7]
  \node (one) at (0,2) {$G$};
  \node (a) at (-3,0) {$H_\sigma$};
  \node (b) at (0,0) {$H_\tau$};
  \node (c) at (3,0) {$H_{\sigma\tau}$};
  \node (zero) at (0,-2) {$H_{\operatorname{id}}$};
  \draw  [black, shorten <=-2pt, shorten >=-2pt] (one) to (a);
  \draw  [black, shorten <=-2pt, shorten >=-2pt] (one) to (b);
    \draw  [black, shorten <=-2pt, shorten >=-2pt] (one) to (c);
      \draw [black, shorten <=-2pt, shorten >=-2pt] (a) to (zero);
        \draw  [black, shorten <=-2pt, shorten >=-2pt] (a) to (zero);  \draw [black, shorten <=-2pt, shorten >=-2pt] (b) to (zero);
          \draw  [black, shorten <=-2pt, shorten >=-2pt] (c) to (zero);
\end{tikzpicture}
\end{center}

Let $\varepsilon_G,\varepsilon_\sigma,\varepsilon_{\sigma\tau},\varepsilon_\tau,\text{ and }\varepsilon_{\operatorname{id}}$ be the idempotents in the group ring $\mathbb{Q}[G]$ associated to the subgroups 
$G,H_\sigma, H_{\sigma\tau},H_\tau,\text{ and } H_{\operatorname{id}},$ respectively. Notice that we have a partition of $G$ given by $G=H_\sigma\cup H_{\sigma\tau}\cup H_\tau$. Following Remark \ref{R:partitionrelation}, there is an idempotent relation given by:

$$2\varepsilon_G-\varepsilon_\sigma-\varepsilon_{\sigma\tau}-\varepsilon_\tau+\varepsilon_{\operatorname{id}}=0.$$

 The endomorphism $\phi:\mathbb{P}^1_x\rightarrow\mathbb{P}^1_x$ given by $$\phi(x)=\frac{x(x^2+2)}{2x^2+1}$$
commutes with the automorphisms $\sigma\text{ and }\tau$. For this reason, it is natural to study the maps induced by $\phi$ on the quotients of $\PP^1$ by the various subgroups of $G$.

Since all of the quotients of $\PP^1$ by the subgroups of $G$ are isomorphic to $\PP^1$, we introduce the notation 
$$\PP^1_\sigma:=\PP^1 /{H_\sigma}\cong\PP^1,~~~\PP^1_\tau:=\PP^1 /{H_\tau}\cong\PP^1,~~~\PP^1_{\sigma\tau}:=\PP^1 /{H_{\sigma\tau}}\cong\PP^1,~~~\PP^1_G:=\PP^1 /{G}\cong\PP^1.$$ We define the projections

\begin{align*}
 \pi_\sigma:\PP^1&\rightarrow \PP^1_\sigma & \pi_{\sigma\tau}:\PP^1&\rightarrow \PP^1_{\sigma\tau} & \pi_{\tau}:\PP^1&\rightarrow \PP^1_\tau & \pi_{G}:\PP^1&\rightarrow \PP^1_G  \\
 x&\mapsto u:=x^2, & x&\mapsto v:=x-\frac{1}{x}, & x&\mapsto w:=x+\frac{1}{x},& x&\mapsto z:=x^2+\frac{1}{x^2}, \\
\end{align*}
and further define the projections $\nu_1:\PP^1_\sigma\rightarrow \PP^1_G$, $\nu_2:\PP^1_{\sigma\tau}\rightarrow \PP^1_G$, and $\nu_3:\PP^1_\tau\rightarrow \PP^1_G$ so that the following diagram commutes:
\begin{center}
\begin{tikzcd}[column sep=1.4cm]
{} & \mathbb{P}^1_\sigma\arrow{dr}{\pi_\tau} \arrow{d}{\pi_{\sigma\tau}} \arrow{dl}[swap]{\pi_\sigma} & \\    \mathbb{P}^1_\sigma\arrow{dr}[swap]{\nu_1} &\mathbb{P}^1_{\sigma\tau}\arrow{d}{\nu_2} & \mathbb{P}^1_\tau\arrow{dl}{\nu_3} \\    & \mathbb{P}^1_G&
\end{tikzcd}
\end{center}
and $\nu_1\circ\pi_\sigma=\nu_2\circ\pi_{\sigma\tau}=\nu_1\circ\pi_\tau=\pi_G$.
Now, we wish to study the maps induced by $\phi$ on the quotients. We define $\phi_\sigma$ to be the map ensuring the commutativity of the following diagram
\[\begin{tikzcd}
\PP_1 \arrow{r}{\phi} \arrow{d}{\pi_\sigma} & \PP_1 \arrow{d}{\pi_\sigma} \\
\mathbb{P}^1_\sigma \arrow{r}{\phi_\sigma}& \mathbb{P}^1_\sigma,
\end{tikzcd}
\]
and we define $\phi_{\sigma\tau}$, $\phi_\tau$ and $\phi_G$ analogously. Computations give that:

$$\phi_\sigma(u)=\frac{u(u+2)^2}{(2u+1)^2},~\phi_{\sigma\tau}(v)=\frac{v^3+3v}{2v^2+9},~\phi_{\tau}(w)=\frac{w^3+5w}{2w^2+1}, ~\text{and } \phi_G(z)=\frac{z^3+8z^2+37z+48}{(2z+5)^2}. $$

For the sake of space, we introduce some notation. Given a subgroup $H$ of $G$ and a field $K$, we write $\calP_H(K)$ to denote $\Per_{\phi_H}(\PP^1/H)(K)$. So, for example, $\calP_{H_\sigma}(\Q)= \Per_{\sigma}(\PP^1_\sigma)(\Q)$  We used Sage to find the following sets and counts of $K$-periodic points \cite{sage}.

\begin{table}[H]
    \centering
    \begin{tabular}{|c|c|c|c|c|c|c|}
    \hline
    $H\leq G$ & $\calP_H(\Q)$ & $|\calP_H(\Q)|$  &$\calP_H(\F_5)$ & $|\calP_H(\F_5)|$ & $\calP_H(\F_7)$ &$|\calP_H(\F_7)|$\\\hline
    
    $H_{\operatorname{id}}$ &$\{0,\pm1,\infty\}$ &4 &$\{0,1,2,3,4,\infty\}$ &6 &$\{0,1,6,\infty\}$ &4\\    \hline 
    
    $ H_\sigma$ & $\{0,\pm1,\infty\}$ &4 &$\{0,1,4,\infty\}$ &4 &$\{0,1,6,\infty\}$ &4\\ \hline
    
    $H_{\sigma\tau}$ &$\{0,\infty\}$ &2 &$\{0,1,2,3,4,\infty\}$  &6 &$\{0,1,6,\infty\}$  &4\\ \hline
    
    $H_\tau$ &$\{0,\pm 2,\infty\}$ &4 &$\{0,2,3,\infty\}$ &4 &$\{0,2,5,\infty\}$ &4\\ \hline
    
     $G$ & $\{-4,\pm2,\infty \}$ &4 &$\{1,2,3,\infty\}$ &4 &$\{2,3,5,\infty\}$ &4\\ \hline

    \end{tabular}
    \caption{Sets and counts of periodic points in $\PP^1/H$ for the fields $\Q$, $\F_5$, and $\F_7$}
    \label{tab:example point counts}
\end{table}

Recall that we have the idempotent relation:
$$2\varepsilon_G-\varepsilon_\sigma-\varepsilon_{\sigma\tau}-\varepsilon_\tau+\varepsilon_{\operatorname{id}}=0.$$

In the case of $K=\F_5$, we see that we have the relation:

\begin{align*} 2|\calP_G(\F_5)|-|\calP_{\sigma}(\F_5)|- |\calP_{{\sigma\tau}}(\F_5)|-|\calP_{\tau}(\F_5)|+|\calP_{{\operatorname{id}}}(\F_5)|&=2\cdot4-4-6-4+6=0.\end{align*}

In the case of $K=\F_7$, the relation induced by the idempotent relation also holds:

\begin{align*} 2|\calP_G(\F_7)|-|\calP_{\sigma}(\F_7)|- |\calP_{{\sigma\tau}}(\F_7)|-|\calP_{\tau}(\F_7)|+|\calP_{{\operatorname{id}}}(\F_7)|&=2\cdot4-4-4-4+4=0.\end{align*}

However, we do not get a corresponding relation on the $\Q$-periodic points, as can be seen below:

\begin{align*} 2|\calP_G(\Q)|-|\calP_{\sigma}(\Q)|- |\calP_{{\sigma\tau}}(\Q)|-|\calP_{\tau}(\Q)|+|\calP_{{\operatorname{id}}}(\Q)|&=2\cdot4-4-2-4+4=2\neq0.\end{align*}

Thus, we see that idempotent relations do not always induce periodic point count relations over number fields.


\section*{Acknowledgements} 
The author would like to thank her advisor, Joseph Silverman, for suggesting this project, for many helpful conversations, and for his guidance in the process of preparing this paper.
The author would also like to thank Robert L. Benedetto and Jamie Juul for helpful conversations during a visit to Amherst College.

\bibliographystyle{plain}
\bibliography{bib.bib}

\end{document}